\nonstopmode \numberwithin{equation}{section}
\newcommand{\ubrace}[2]{\mathord{\mathpalette\ubrace@{{#1}{#2}}}}
\newcommand{\ubrace@}[2]{\ubrace@@#1#2}
\newcommand{\ubrace@@}[3]{
	\underbrace{#1#2}_{#3}%
}
\newtheorem{Th}{Theorem}[subsection]
\newtheorem{cor}[Th]{Corollary}
\theoremstyle{definition}
\newtheorem{defn}[Th]{Definition}
\newtheorem{ex}[Th]{Example}
\newtheorem{Thm}{Theorem}[section]
\newtheorem{Lem}[Thm]{Lemma}
\newtheorem{Prop}[Thm]{Proposition}
\theoremstyle{definition}
\newtheorem{Rem}[Thm]{Remark}
\newtheorem{Ex}[Thm]{Example}
\begin{document}
	\bibliographystyle{amsplain}
	\title{A structure theorem and left-orderability of a quotient of quasi-isometry group of the real line}
	\author{Swarup Bhowmik}
	\address{Swarup Bhowmik, Department of Mathematics,
		Indian Institute of Technology Kharagpur, Kharagpur - 721302, India.}
	\email{swarup.bhowmik@iitkgp.ac.in}
	\author{Prateep Chakraborty}
	\address{Prateep Chakraborty, Department of Mathematics,
		Indian Institute of Technology Kharagpur, Kharagpur - 721302, India.}
	\email{prateep@maths.iitkgp.ac.in}
	\begin{abstract}
	It is well-known that $QI(\mathbb{R})\cong(QI(\mathbb{R}_{+})\times QI(\mathbb{R}_{-}))\rtimes <t>$, where $QI(\mathbb{R})$(resp. $QI(\mathbb{R}_{+})(\cong QI(\mathbb{R_-}))$) is the group of quasi-isometries of the real line (resp. $[0,\infty)$). We introduce an invariant for the elements of $QI(\mathbb{R_{+}})$ and split it into smaller units. We give an almost characterization of the elements of these units. We also show that a quotient of $QI(\mathbb{R_{+}})$ gives an example of a left-orderable group which is not locally indicable.
	\end{abstract}
	\thanks{2020 \textit{Mathematics Subject Classification.} 20F60, 20F65, 37E10}
	\thanks{\textit{Key words and phrases.} PL-homeomorphism groups, quasi-isometries of real line, left-orderable groups.}
	\thanks{The first author of this article acknowledges the financial support from Inspire, DST, Govt. of India as a Ph.D. student (Inspire) sanction
		letter number: No. DST/INSPIRE Fellowship/2018/IF180972}
	\maketitle
	\pagestyle{myheadings}
	\markboth{S. Bhowmik and P. Chakraborty}{A structure theorem and left-orderablity of a quotient of quasi-isometry group of the real line}
	\bigskip
	\section{Introduction}
	The notion of quasi-isometry is one of the fundamental concepts in geometric group theory. Let $(X,d)$ be a metric space. The group of quasi-isometries from $X$ to itself is denoted by $QI(X)$ and is a quasi-isometric invariant of $X$. It is well-knowm that when $\Gamma$ is a finitely generated group, a choice of finite generating set $S$ gives the word metric $d_S$ on $\Gamma$, making it a metric space. Then one can talk about the quasi-isometry group $QI(\Gamma)$ and it can be shown that $QI(\Gamma)$ does not depend on the choice of $S$. Hence $QI(\Gamma)$ becomes an invariant of the group $\Gamma$. In general, $QI(\Gamma)$ is hard to determine and for very few families of groups $\Gamma$, $QI(\Gamma)$ has been explicitly studied, for example, solvable Baumslag-Solitar groups $BS(1,n)$ \cite[Theorem 7.1]{Farb Mosher}, the groups $BS(m,n),~1<m<n$ \cite[Theorem 4.3]{Whyte}, irreducible lattices in semisimple Lie groups (see \cite{Farb} and the references therein) etc.\\
	Even for $\Gamma=\mathbb{Z}^n$, $QI(\mathbb{Z}^n)(\cong QI(\mathbb{R}^n))$ remain largely unexplored, especially for $n>1$, though some attempts have been made in \cite{Mitra Sankaran} and \cite{Bhowmik Chakraborty}. In particular when $n=1$, Gromov and Pansu \cite[\textsection 3.3.B]{Gromov Pansu} observed that $Bilip(\mathbb{R})\rightarrow QI(\mathbb{R})$ is surjective and that $QI(\mathbb{R})$ is infinite dimensional . Furthermore, Sankaran \cite{Sankaran} proved that there is a surjection from $PL_\delta(\mathbb{R})$ to $QI(\mathbb{R})$. This work of Sankaran gives a nice and well understood representation of the elements of $QI(\mathbb{R})$. However, it is not enough to say when two such representations lie in the same equivalence class. This article is an attempt to characterize the elements of $QI(\mathbb{R})$ and split $QI(\mathbb{R})$ into smaller and nicer units. \\
	It is already observed in \cite{Ye Zhao} that $QI(\mathbb{R})\cong(QI(\mathbb{R}_{+})\times QI(\mathbb{R}_{-}))\rtimes <t>$, where $QI(\mathbb{R}_+)$ (resp. $QI(\mathbb{R}_-)$) is the quasi-isometry group of the ray [0,$\infty$) (resp. (-$\infty$,0]) viewed as subgroup of $QI(\mathbb{R})$ fixing the negative part (resp. positive part) and $t\in QI(\mathbb{R})$ is the reflection $t(x)=-x$ for any $x\in\mathbb{R}$. So to characterize $QI(\mathbb{R})$, it is enough to study $QI(\mathbb{R}_{+})$ due to the fact that $QI(\mathbb{R_{+}})\cong QI(\mathbb{R_-})$. \\ In this article, we introduce an invariant defined by\\ $S_{[g]}=\Big\{\displaystyle\lim_{n\rightarrow\infty}\frac {g(x_n)}{x_n}:x_n\rightarrow\infty~\text{and}~ \frac {g(x_n)}{x_n}~\text{converges}\Big\}$ for any $[g]\in QI(\mathbb{R}_{+})$. With respect to this invariant, we define some subsets of $QI(\mathbb{R}_{+})$ as follows: \\
	For $c>0$ and $I\subset (0,\infty)$, $H_c=\{[g]\in QI(\mathbb{R_{+}}): S_{[g]}=c\}$, $H_I=\{[g]\in QI(\mathbb{R_{+}}): S_{[g]}=I\}$ and $\overline{H_c}=\displaystyle\bigcup_{c\in\mathbb{R}_{>0}}H_c$. Note that for $c=1$, $H_c$ coincides with $H$, already defined in \textsection 2.1 of \cite{Ye Zhao}. Then we show that 	$QI(\mathbb{R_{+}})/\overline{H_c}=\displaystyle\bigsqcup_{M\geq 1}H_{[1,M]}/H$ where the above decomposition is that of right cosets of $QI(\mathbb{R_{+}})/\overline{H_c}$. Then, to explore the set $H_{[1,M]}/H$, we obtain an almost complete characterization of its elements in terms of the slopes of the representations coming from $PL_\delta(\mathbb{R})$.\\
	 For $[f],[g]\in H_{[1,M]}/H,$ if no sequence $\{a_n\}$ exists such that $\{a_n\}\rightarrow\infty$ and  $\displaystyle\lim_{n\rightarrow\infty}\frac {f(a_n)}{a_n}=1=\displaystyle\lim_{n\rightarrow\infty}\frac {g(a_n)}{a_n}$, then $H[f]\neq H[g]$. On the other hand, suppose, there exists a strictly increasing sequence $\{a_n\}$ with $\displaystyle\lim_{n\rightarrow\infty}\frac {f(a_n)}{a_n}=1=\displaystyle\lim_{n\rightarrow\infty}\frac {g(a_n)}{a_n}$. Then on each interval $[a_n,a_{n+1}]$, we get a partition $a_n=x_{n,0}<x_{n,1}<x_{n,2}<...<x_{n,r}=a_{n+1}$
	 such that on each interval $[x_{n,i-1},x_{n,i}]$, $f$ and $g$ are both linear. So, for each $n$ we get two $r$-tuples $(\lambda_{n,1},\lambda_{n,2},...,\lambda_{n,r})$ and $(\lambda_{n,1}^{'},\lambda_{n,2}^{'},...,\lambda_{n,r}^{'})$
	 where $\lambda_{n,i}$ and $\lambda_{n,i}^{'}$ are slopes of $f$ and $g$ on $[x_{n,i-1},x_{n,i}]$ respectively. Thus two sequences $\{A_n\}$ and $\{B_n\}$ appear in $\mathbb{R}_{>0}^{\infty}\big(=\displaystyle\bigcup_{m\in\mathbb{N}}\mathbb{R}_{>0}^{m}\big)$ for $f$ and $g$ respectively where $A_n$ and $B_n$ are tuples of slopes of $f$ and $g$ on $[a_n,a_{n+1}]$ respectively.
	 \begin{Thm}\label{1.1}
	 	\textit{Suppose $H[f],H[g]\in H_{[1,M]}/H$ and there exists a sequence $\{a_n\}$ with $\frac {f(a_n)}{a_n},\frac {g(a_n)}{a_n}\rightarrow 1.$ Then\\
	 	(a) if $||A_n-B_n||_{\infty}\rightarrow 0$, then $H[f]=H[g]$ and\\ 
	 	(b) if there exists a $K_1>1$ such that $x_{n,i}>K_1x_{n,i-1}$ and $H[f]=H[g]$, then $||A_n-B_n||_{\infty}\rightarrow 0$.}
	 \end{Thm}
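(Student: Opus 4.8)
My plan is to reduce both directions to the single analytic statement that, for PL representatives $f,g$, one has $H[f]=H[g]$ in $QI(\mathbb{R}_+)$ if and only if $f(y)/g(y)\to 1$ as $y\to\infty$. The reasoning would be: $H[f]=H[g]$ means $[f\circ g^{-1}]\in H$, i.e. $S_{[f\circ g^{-1}]}=\{1\}$; since $f\circ g^{-1}$ is a quasi-isometry, $(f\circ g^{-1})(x)/x$ stays between two positive constants for large $x$, so $\limsup$ and $\liminf$ of this ratio are attained along sequences $x_n\to\infty$ and hence lie in $S_{[f\circ g^{-1}]}$, which forces $\lim_{x\to\infty}(f\circ g^{-1})(x)/x=1$; substituting $x=g(y)$ (valid because $g$ is an increasing self-homeomorphism of $\mathbb{R}_+$) turns this into $f(y)/g(y)\to1$, equivalently $D(y):=f(y)-g(y)=o(y)$ using $g(y)\asymp y$. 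I would record once that the standing hypothesis on $\{a_n\}$ gives $D(a_n)=a_n\big(\tfrac{f(a_n)}{a_n}-\tfrac{g(a_n)}{a_n}\big)=o(a_n)$, and (if not already established earlier) that $H\trianglelefteq QI(\mathbb{R}_+)$ and that $S_{[\cdot]}$ is independent of the chosen representative.

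For (a), I would fix a large $y$, locate it as $y\in[x_{n,j-1},x_{n,j}]\subset[a_n,a_{n+1}]$, and integrate the piecewise-constant slopes of $f$ and $g$ over $[a_n,y]$ along the common partition. Telescoping gives
\[
D(y)=D(a_n)+\sum_{i=1}^{j-1}(\lambda_{n,i}-\lambda'_{n,i})(x_{n,i}-x_{n,i-1})+(\lambda_{n,j}-\lambda'_{n,j})(y-x_{n,j-1}),
\]
whence $|D(y)|\le|D(a_n)|+\|A_n-B_n\|_\infty(y-a_n)$ and, dividing by $y\ge a_n$, $|D(y)|/y\le|D(a_n)|/a_n+\|A_n-B_n\|_\infty\to0$. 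This proves $D(y)=o(y)$, hence $H[f]=H[g]$. The only thing to notice is that the accumulated slope error over $[a_n,y]$ is automatically bounded by $\|A_n-B_n\|_\infty\cdot(y-a_n)$, hence controlled relative to $y$, so no hypothesis on the lengths $a_{n+1}-a_n$ is needed in this direction.

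For (b), I would argue the converse from $D=o(y)$ together with the spreading condition $x_{n,i}>K_1x_{n,i-1}$. On each linear piece $[x_{n,i-1},x_{n,i}]$,
\[
\lambda_{n,i}-\lambda'_{n,i}=\frac{D(x_{n,i})-D(x_{n,i-1})}{x_{n,i}-x_{n,i-1}} .
\]
Given $\eta>0$, pick $X_0$ with $|D(x)|\le\eta x$ for $x\ge X_0$; then for every $n$ with $a_n\ge X_0$ and every $i$ we have $x_{n,i-1},x_{n,i}\ge a_n\ge X_0$, and bounding the denominator below by $x_{n,i}(1-1/K_1)$ (using $x_{n,i-1}<x_{n,i}/K_1$) and the numerator above by $\eta x_{n,i}(1+1/K_1)$ yields $|\lambda_{n,i}-\lambda'_{n,i}|<\eta\,\frac{K_1+1}{K_1-1}$, uniformly in $i$. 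Hence $\limsup_n\|A_n-B_n\|_\infty\le\eta\,\frac{K_1+1}{K_1-1}$, and letting $\eta\downarrow0$ gives $\|A_n-B_n\|_\infty\to0$.

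I do not anticipate a serious technical obstacle: the two computations are just a telescoping identity and elementary estimates. The one genuinely load-bearing hypothesis is the geometric spreading $x_{n,i}>K_1x_{n,i-1}$ in part (b) — it is precisely what upgrades the additive control $D=o(y)$ to the multiplicative (slope) control $\|A_n-B_n\|_\infty\to0$, since without it a very short subinterval could host a large slope discrepancy between $f$ and $g$ that cancels out and is invisible to $D/y$. So I would flag that step as the conceptual heart of the statement, and I would also take care that the reformulation $H[f]=H[g]\Leftrightarrow f(y)/g(y)\to1$ is genuinely independent of the chosen PL representatives (it is, since QI-equivalence alters $h(x)/x$ only by a term tending to $0$).
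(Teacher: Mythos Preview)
Your argument is correct and follows the same route as the paper: both reduce the question to the analytic criterion $|f(x)-g(x)|/x\to 0$ (the paper's Lemma~\ref{3.5}, your $D(y)=o(y)$), prove (a) by the identical telescoping sum over the common partition, and prove (b) by isolating the $i$-th slope difference and using $x_{n,i}>K_1x_{n,i-1}$ to bound the denominator from below. The one cosmetic difference is that in (b) you write $\lambda_{n,i}-\lambda'_{n,i}=\dfrac{D(x_{n,i})-D(x_{n,i-1})}{x_{n,i}-x_{n,i-1}}$ directly and bound numerator and denominator separately, whereas the paper instead substitutes $x=x_{n,i}$ into the telescoped expression for $D(x)/x$, splits it as $A+B$, and runs through four sign cases for $A$ and $B$; your version sidesteps that case analysis and lands on the same constant $\eta\,\tfrac{K_1+1}{K_1-1}=\tfrac{\epsilon(1+1/K_1)}{1-1/K_1}$.
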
 
    In recent times left-orderability and locally indicability of a group are drawing the attention of many researchers since the left-orderable groups, left-invariant orders on groups and locally indicable groups have strong connections with algebra, dynamics and topology. It is known that every locally indicable group is left orderable \cite[Theorem 7.3.11]{Mura Rhemtulla}; it was an open question whether the converse was true. In \cite{Bergman}, the author came up with a counterexample. This note provides another counterexample in the form of $QI(\mathbb{R_{+}})/H$ and attains the following results.
    \begin{Thm}\label{1.2}
    	\textit{The quasi-isometry group $QI(\mathbb{R_{+}})/H$ is left-orderable.}
    \end{Thm}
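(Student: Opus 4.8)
The plan is to construct a left order on $G:=QI(\mathbb{R}_{+})/H$ directly, by exhibiting a positive cone $P$ (a sub‑semigroup of $G$ with $G=P\sqcup\{1\}\sqcup P^{-1}$), after which $a<b:\iff a^{-1}b\in P$ is the desired left‑invariant total order. First I would record that $H$ is normal in $QI(\mathbb{R}_{+})$: if $g(x)/x\to 1$ and $h$ is any quasi‑isometry, then $g(y)-y=o(y)$ while $h$ is coarsely Lipschitz with $h(y)\asymp y$, so $h(g(y))-h(y)=o(h(y))$ and hence $hgh^{-1}(x)/x\to 1$. Thus $G$ is a group whose elements are represented by increasing bi‑Lipschitz maps $g\colon\mathbb{R}_{+}\to\mathbb{R}_{+}$, two representing the same element iff $g_{1}(x)/g_{2}(x)\to 1$; on $G$ the interval $S_{[g]}=[\alpha(g),\beta(g)]$ with $\alpha(g)=\liminf_{x\to\infty}g(x)/x$, $\beta(g)=\limsup_{x\to\infty}g(x)/x$ is well defined and satisfies $\alpha(g^{-1})=1/\beta(g)$, $\beta(g^{-1})=1/\alpha(g)$, $\alpha(gh)\ge\alpha(g)\alpha(h)$ and $\beta(gh)\le\beta(g)\beta(h)$.

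I would build $P$ in strata. For $[g]\neq 1$, put $[g]\in P$ if $\alpha(g)>1$, or if $\alpha(g)=1<\beta(g)$ (the case where $[g]$ is, after rescaling, one of the $H_{[1,M]}$ classes); symmetrically $[g]\in P^{-1}$ if $\beta(g)<1$ or if $\alpha(g)<1=\beta(g)$. The super/submultiplicativity of $\alpha,\beta$ makes this consistent with trichotomy on the part of $G$ it covers — all $[g]$ with $1\notin(\alpha(g),\beta(g))$ — and shows moreover that $\beta(gh)>1$ whenever $[g],[h]$ lie in the non‑negative part of this list. What remains is the oscillatory stratum $Z=\{[g]:\alpha(g)<1<\beta(g)\}$, which satisfies $Z=Z^{-1}$ and on which I must still select one of $[g],[g^{-1}]$ from each inverse pair.

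This selection is where Theorem~\ref{1.1} is meant to be used. For $[g]\in Z$ one has $1\in S_{[g]}$, so there is a strictly increasing $a_{n}\to\infty$ with $g(a_{n})/a_{n}\to 1$, and the bi‑Lipschitz PL representative of $g$ is encoded on $[a_{n},a_{n+1}]$ by a slope‑tuple $A_{n}$; the corresponding tuples for $g^{-1}$ along $g(a_{n})$ are the entrywise reciprocals of the $A_{n}$. I would fix once and for all a non‑principal ultrafilter $\mathcal{U}$ on $\mathbb{N}$ and declare $[g]\in P$ iff, for $\mathcal{U}$‑almost every $n$, the first coordinate in which $A_{n}$ differs from $(1,\dots,1)$ exceeds $1$. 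Theorem~\ref{1.1}(a)--(b) is exactly what is needed to see that this sign does not depend on the bi‑Lipschitz representative of $[g]$ nor, up to finitely many indices, on the choice of $(a_{n})$: two representatives of one class satisfy $\|A_{n}-B_{n}\|_{\infty}\to 0$ (once the $a_{n}$ are chosen sufficiently spread out, as in the hypothesis of Theorem~\ref{1.1}(b)), which cannot change the $\mathcal{U}$‑almost‑sure sign of the leading discrepancy, while genuinely distinct classes are separated. Since passing to $g^{-1}$ reciprocates every entry, it flips that sign, so exactly one of $[g],[g^{-1}]$ is selected, and combining with the previous strata gives the trichotomy $G=P\sqcup\{1\}\sqcup P^{-1}$.

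The hard part is checking $P\cdot P\subseteq P$. Products that leave $Z$ are controlled by the multiplicativity estimates; the delicate case is when two elements of $P$ — two members of $Z$, or one member of $Z$ and one with $\alpha\ge 1$ — multiply back into $Z$. There one has to compare the slope‑tuples of $g$, of $h$ and of $gh$ along a single sequence $(a_{n})$ that is simultaneously a reference sequence for all three (such a sequence exists along a common subsequence, after coarsening for $gh$) and argue, from the bi‑Lipschitz and PL structure together with the quantitative content of Theorem~\ref{1.1} — notably the separation condition $x_{n,i}>K_{1}x_{n,i-1}$ — that the leading discrepancy of the tuple of $gh$ has the sign forced by those of $g$ and $h$, and in particular that $\beta(gh)>1$, so that $gh$ falls in $Z$ rather than in the negative cone. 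This bookkeeping is the technical core of the argument; granting it, $QI(\mathbb{R}_{+})/H$ is left‑orderable.
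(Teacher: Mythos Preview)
Your approach is quite different from the paper's and, as written, has genuine gaps. The paper does not attempt to build a global positive cone at all: it invokes Navas's criterion (the Lemma in \S2.3). Given finitely many nontrivial $f_1,\dots,f_n\in G/H$, it iteratively selects reference sequences $\{x_{j,k}\}$ and signs $\epsilon_i$, then proves by induction on word length that every nonempty word $w$ in the $f_i^{\epsilon_i}$ admits a subsequence along which $\lim_k w(x_{t+1,k})/x_{t+1,k}>1$, hence $w\neq 1$. The only tools are that PL representatives are increasing bi-Lipschitz maps and the multiplicativity of ratios; Theorem~\ref{1.1} is not used.

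Your construction has two concrete problems. First, the sign you assign to $[g]\in Z$ is not well defined. Theorem~\ref{1.1} is stated only for $H_{[1,M]}/H$, not for elements with $\alpha(g)<1<\beta(g)$; more importantly, part~(b) requires the growth hypothesis $x_{n,i}>K_1 x_{n,i-1}$ on the common partition, which you cannot force simply by spreading out the $a_n$ (the break points of a PL representative are fixed). And even if $\|A_n-B_n\|_\infty\to 0$ held, this does not stabilise the sign of the \emph{first} coordinate differing from $1$: take $A_n=(1+\tfrac1n,2)$ and $B_n=(1-\tfrac1{n^2},2)$. So two PL representatives of the same class can receive opposite signs.

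Second, you do not prove $P\cdot P\subseteq P$; you describe what must be checked and then write ``granting it''. This is the whole content of left-orderability. For $[g],[h]\in Z\cap P$ there is no common reference sequence $(a_n)$ guaranteed to work for $g$, $h$ and $gh$ simultaneously, no mechanism linking the first-discrepancy sign of $gh$ to those of $g$ and $h$, and no lower bound of the form $\beta(gh)>1$ (the estimate $\beta(gh)\ge\alpha(g)\beta(h)$ is useless when $\alpha(g)<1$). Without this closure, you have neither a positive cone nor an order. The paper's finitary argument via Navas's criterion sidesteps all of these difficulties.
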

\begin{Thm}\label{1.3}
	\textit{The quasi-isometry group $QI(\mathbb{R_{+}})/H$ is not locally indicable.}
\end{Thm}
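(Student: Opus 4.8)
The plan is to exhibit, inside $QI(\mathbb{R}_{+})/H$, a nontrivial finitely generated subgroup whose abelianization is finite. Any such subgroup has no epimorphism onto $\mathbb{Z}$, so its presence immediately shows that $QI(\mathbb{R}_{+})/H$ is not locally indicable. Since $QI(\mathbb{R}_{+})/H$ is left-orderable by Theorem~\ref{1.2}, it is torsion-free; hence a finite subgroup will not do, and one genuinely needs a finitely generated torsion-free group that is (almost) perfect. The natural source is the collection of central extensions of Thompson-type groups, and the candidate I would use is $\widetilde{T}$, the universal central extension of R.~Thompson's group $T$, presented as the group of piecewise-linear homeomorphisms $f\colon\mathbb{R}\to\mathbb{R}$ with breakpoints in $\mathbb{Z}[1/2]$, slopes in $2^{\mathbb{Z}}$, and $f(x+1)=f(x)+1$ --- equivalently the preimage of $T$ in $\widetilde{\mathrm{Homeo}}^{+}(S^{1})$.

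First I would record the properties of $\widetilde{T}$ that matter. It is \emph{finitely generated}: if $f_{1},\dots,f_{k}$ are lifts of a finite generating set of $T$ and $\tau$ denotes $x\mapsto x+1$, then $\widetilde{T}=\langle f_{1},\dots,f_{k},\tau\rangle$, because $\langle\tau\rangle$ is exactly the (central) kernel of $\widetilde{T}\to T$. It is \emph{torsion-free}, being a group of orientation-preserving homeomorphisms of $\mathbb{R}$. And it has \emph{finite abelianization}: from the five-term exact sequence of $1\to\langle\tau\rangle\to\widetilde{T}\to T\to 1$ together with $H_{1}(T)=0$ (simplicity of $T$) one gets $H_{1}(\widetilde{T})\cong\operatorname{coker}\bigl(H_{2}(T)\to\mathbb{Z}\bigr)$, which is cyclic, and it is finite because the connecting map is nonzero --- equivalently, the extension is non-split, since a section would embed the torsion group $T$ into the torsion-free group $\widetilde{T}$. (It is in fact known that $\widetilde{T}$ is perfect, via the Ghys--Sergiescu description of $H^{2}(T;\mathbb{Z})$, but the weaker statement is all that is used.)

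Next I would embed $\widetilde{T}$ into $QI(\mathbb{R}_{+})$ by transporting its tautological action on $\mathbb{R}$ to $(0,\infty)$ through $\exp\colon\mathbb{R}\xrightarrow{\ \sim\ }(0,\infty)$. For $f\in\widetilde{T}$ write $f(x)=x+\psi(x)$ with $\psi$ bounded, $1$-periodic and piecewise-linear; the conjugate $g_{f}:=\exp\circ f\circ\log$ is then $g_{f}(y)=e^{\psi(\log y)}y$, whose almost-everywhere derivative $g_{f}'(y)=e^{\psi(\log y)}f'(\log y)$ takes values in a fixed compact subinterval of $(0,\infty)$. Hence $g_{f}$ is a bi-Lipschitz self-homeomorphism of $(0,\infty)$ --- extend it by $g_{f}(0)=0$ --- and in particular a quasi-isometry of $\mathbb{R}_{+}$. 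Conjugation by $\exp$ and the (surjective) map $\mathrm{Bilip}(\mathbb{R}_{+})\to QI(\mathbb{R}_{+})$ are homomorphisms, so $f\mapsto[g_{f}]$ is a homomorphism $\widetilde{T}\to QI(\mathbb{R}_{+})$; and it remains injective after reduction modulo $H$, because $[g_{f}]\in H$ means $g_{f}(y)/y=e^{\psi(\log y)}\to1$ as $y\to\infty$, and a bounded $1$-periodic function with $\psi(\log y)\to0$ must be identically $0$, i.e.\ $f=\mathrm{id}$. (One can also see this from $S_{[g_{f}]}=\bigl[e^{\min\psi},e^{\max\psi}\bigr]$, which equals $\{1\}$ only if $\psi\equiv0$.) Thus $QI(\mathbb{R}_{+})/H$ contains an isomorphic copy of $\widetilde{T}$, which is nontrivial, finitely generated and of finite abelianization, completing the argument.

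The step I expect to be most delicate is the abelianization computation for $\widetilde{T}$: one must justify that the connecting homomorphism $H_{2}(T)\to\mathbb{Z}$ does not vanish --- i.e.\ that $\widetilde{T}\to T$ does not split --- and, for the sharper perfectness statement, invoke the Ghys--Sergiescu description of $H^{2}(T;\mathbb{Z})$. By comparison the embedding is routine: it comes down to checking that $y\mapsto e^{\psi(\log y)}y$ is bi-Lipschitz on $(0,\infty)$ (uniform bounds on its derivative) and that a bounded periodic function cannot tend to $0$ at infinity unless it is identically zero.
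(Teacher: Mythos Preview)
Your argument is correct, and the embedding step is essentially the same mechanism the paper uses: conjugating $\mathbb{Z}$-equivariant homeomorphisms of $\mathbb{R}$ by the exponential to land in $\mathrm{Bilip}(\mathbb{R}_{+})$, then observing that a nontrivial bounded $1$-periodic displacement produces $S_{[g_f]}\neq\{1\}$, so nothing collapses modulo $H$. The paper invokes this via Corollary~2.4 of \cite{Ye Zhao}; you spell it out directly.

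Where you genuinely diverge is in the choice of the finitely generated subgroup with no map to $\mathbb{Z}$. The paper follows Bergman \cite{Bergman} and realizes the $(2,3,7)$ central extension $\Gamma=\langle x,y,z\mid x^{2}=y^{3}=z^{7}=xyz\rangle$ inside $\widetilde{PSL(2,\mathbb{R})}$ by writing down explicit lifts of parabolic and elliptic matrices and verifying the relations; the absence of a homomorphism to $\mathbb{Z}$ is then a two-line linear computation from the defining relations. Your route through $\widetilde{T}$ trades that concrete matrix construction for a cleaner, already-PL model, at the price of importing heavier background: simplicity of $T$, the five-term exact sequence, and---if you want perfectness rather than just finite abelianization---the Ghys--Sergiescu description of $H_{2}(T)$. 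Your non-splitting argument (torsion in $T$ versus torsion-freeness of $\widetilde{T}$) does suffice for the weaker statement you actually need, so the proof stands; the paper's version is simply more self-contained on the group-theoretic side.
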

	In another direction, as an application of Theorem \ref{1.1}, we obtain a large class of non-commuting elements of the quasi-isometry group $QI(\mathbb{R}_{+})$.
	\section{Preliminaries}
	\subsection{Quasi-isometry}
	We begin by describing the notion of quasi-isometry. Let $f:(X,d)\rightarrow (X',d')$ be a map between metric spaces. We say that $f$ is a quasi-isometric embedding if there exists an $M>1$ such that 
	\begin{center}
		$\frac {1}{M}d(x,y)-M\leq d'(f(x),f(y))\leq Md(x,y)+M$, for all $x,y\in X$.
	\end{center}
	If in addition, the image of $f$ is $M'$-dense, that is, if there exists a constant $M'$ such that for any $x'\in X'$, there exists $x\in X$ such that $d'(f(x),x')<M'$, then $f$ is said to be a quasi-isometry. In general, a quasi-isometry $f:X\rightarrow X$ is neither one-one nor onto. So, it would not be possible to find an inverse for $f$. However, there exists another quasi-isometry $g:X'\rightarrow X$, called a quasi-inverse of $f$ such that $g\circ f$ (resp. $f\circ g$) is quasi-isometrically equivalent to the identity map of $X$ (reps. $X'$). (Two maps $f,g:X\rightarrow X'$ are said to be quasi-isometrically equivalent if there exists $C>0$ such that $d'(f(x),g(x))<C$, for all $x\in X$.)\\
	We denote the equivalence class of a quasi-isometry $f:X\rightarrow X$ by $[f]$. Though sometimes $f$ is also used instead of $[f]$ for the convenience of notation. The set $QI(X)$ of all equivalence classes of quasi-isometries of $X$ forms a group under composition: $[f].[g]=[f\circ g]$, for $[f],[g]\in QI(X)$.\\
	Any quasi-isometry $f:X\rightarrow X'$ induces an isomorphism $QI(X)\rightarrow QI(X')$ defined as $[h]\mapsto [f\circ h\circ g]$ where $g:X'\rightarrow X$ is a quasi-inverse of $f$. For example, $t\mapsto [t]$ is a quasi-isometry map from $\mathbb{R}$ to $\mathbb{Z}$. Therefore, $QI(\mathbb{R})$ is isomorphic to $QI(\mathbb{Z})$ as a group. The quasi-isometries of the real line can be represented by $PL$-homeomorphisms of the real line with bounded slopes.
	\subsection{$PL$-homeomorphisms of $\mathbb{R}$ with bounded slopes}
	Let $f:\mathbb{R}\rightarrow\mathbb{R}$ be any homeomorphism of $\mathbb{R}$. Denote by $B(f)$ the set of break points of $f$, that is points where $f$ fails to have derivative and by $\Lambda(f)$ the set of slopes of $f$, that is, $\Lambda(f)=\{f'(t):t\in\mathbb{R}\setminus B(f)\}$. Note that $B(f)\subset \mathbb{R}$ is discrete if $f$ is piecewise differentiable.

	We say that a subset $\Lambda$ of $\mathbb{R}^{*}$) (the set of non-zero real numbers), is \textit{bounded} if there exists an $M>1$ such that $M^{-1}<|\lambda|<M$ for all $\lambda\in \Lambda$. We say that a homeomorphism of $\mathbb{R}$ which is piecewise differentiable has bounded slopes if $\Lambda(f)$ is bounded.
	
    We denote by $PL_\delta(\mathbb{R})$ the set of all those piecewise-linear homeomorphisms $f$ of $\mathbb{R}$ such that $\Lambda(f)$ is bounded. It is clear that $PL_\delta(\mathbb{R})$ is a subgroup of the group $PL(\mathbb{R})$ of all piecewise linear homeomorphisms of $\mathbb{R}$. In \cite{Sankaran}, Sankaran established a connection between two groups $QI(\mathbb{R})$ and $PL_\delta(\mathbb{R})$, which is given as the following theorem.
\begin{Thm}\label{2.1}
	The natural homomorphism $\phi:PL_\delta(\mathbb{R})\rightarrow QI(\mathbb{R})$, defined as $f\mapsto [f]$, is surjective.
\end{Thm}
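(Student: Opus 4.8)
The plan is to handle the easy direction first and then, given an arbitrary quasi-isometry $h$ of $\mathbb{R}$, to manufacture an element of $PL_\delta(\mathbb{R})$ lying within bounded distance of it. For the easy part: if $f\in PL_\delta(\mathbb{R})$ with $M^{-1}<|f'|<M$ off the discrete set $B(f)$, then $f$, being a monotone homeomorphism of $\mathbb{R}$, is absolutely continuous, so integrating $f'$ gives $M^{-1}|x-y|\le|f(x)-f(y)|\le M|x-y|$ for all $x,y$; thus $f$ is a surjective $(M,0)$-quasi-isometry, so $\phi$ is well defined, and $\phi(f\circ g)=[f\circ g]=[f][g]$ since under composition the slope sets multiply and stay bounded. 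It remains to prove surjectivity, i.e. that each quasi-isometry $h\colon\mathbb{R}\to\mathbb{R}$, with constants fixed so that $\tfrac1M|x-y|-M\le|h(x)-h(y)|\le M|x-y|+M$, is quasi-isometrically equivalent to some $f\in PL_\delta(\mathbb{R})$.

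The first substantive step I would carry out is \emph{coarse monotonicity}. Pick an integer $k_0=k_0(M)$ so large that $k_0/M-M\ge 4M$. Then $|h(n+k_0)-h(n)|\ge k_0/M-M>0$ for every $n\in\mathbb{Z}$, so $n\mapsto\operatorname{sign}\bigl(h(n+k_0)-h(n)\bigr)$ is a well-defined map $\mathbb{Z}\to\{\pm1\}$. If it were not constant it would change value between some $n_0$ and $n_0+1$; comparing the four numbers $h(n_0),h(n_0+1),h(n_0+k_0),h(n_0+1+k_0)$ and applying the upper quasi-isometry estimate to the two pairs at distance $1$ would force $|h(n_0+k_0)-h(n_0)|<4M$, contradicting the lower estimate on the pair at distance $k_0$. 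Hence this sign is a constant $\sigma\in\{\pm1\}$. Composing $h$ with the reflection $t(x)=-x\in PL_\delta(\mathbb{R})$ when $\sigma=-1$ (harmless, since $[t]\in\operatorname{im}\phi$, $t^2=\mathrm{id}$, and $\operatorname{im}\phi$ is a subgroup), we may assume $h(n+k_0)-h(n)\ge\eta:=k_0/M-M>0$ for all $n$.

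Next I would interpolate. Put $a_n:=h(n)$, choose $\varepsilon$ with $0<\varepsilon<\eta/k_0$, and set $b_n:=\max_{m\le n}\bigl(a_m+\varepsilon(n-m)\bigr)$, the least sequence dominating $(a_n)$ whose forward increments are $\ge\varepsilon$. The identity $b_{n+1}=\max\bigl(a_{n+1},\,b_n+\varepsilon\bigr)$ together with $a_{n+1}\le a_n+2M\le b_n+2M$ yields $\varepsilon\le b_{n+1}-b_n\le 2M$ for all $n$; and writing $n-m$ in blocks of length $k_0$, using $a_{n+k_0}-a_n\ge\eta>\varepsilon k_0$ on whole blocks and the upper quasi-isometry bound on the residual block, one gets $0\le b_n-a_n\le C_1$ for a constant $C_1=C_1(M)$. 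Let $f\colon\mathbb{R}\to\mathbb{R}$ be the piecewise-linear map with $f(n)=b_n$ for $n\in\mathbb{Z}$ and $f$ affine on each $[n,n+1]$. Then $f$ is strictly increasing with $b_n\to\pm\infty$, so it is a homeomorphism of $\mathbb{R}$; all its slopes lie in $[\varepsilon,2M]$, so $f\in PL_\delta(\mathbb{R})$; and for $x\in[n,n+1]$ the triangle inequality gives
\[
|f(x)-h(x)|\le|f(x)-f(n)|+|f(n)-h(n)|+|h(n)-h(x)|\le 2M+C_1+2M,
\]
so $[f]=[h]$. Transporting back through $t$ if it was used produces a $\phi$-preimage of the original class, finishing the proof.

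I expect the real difficulty to be concentrated in two related places: proving coarse monotonicity — extracting a well-defined direction of increase from purely coarse hypotheses, the point being that the lower quasi-isometry bound becomes \emph{positive} only over windows of length $\gtrsim M^2$ — and, correspondingly, forcing the interpolant's slopes to stay bounded \emph{away from} $0$; both are resolved by passing from unit steps to $k_0$-steps and by replacing $(a_n)$ with the envelope $(b_n)$. Everything else, namely the block-decomposition estimate for $b_n-a_n$ and the bounded-distance computation, is routine bookkeeping that I would compress into a couple of displayed inequalities.
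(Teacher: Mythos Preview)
The paper does not supply its own proof of this theorem: it is quoted verbatim as a result of Sankaran \cite{Sankaran} in the preliminaries, and the paper immediately moves on. So there is nothing in the present paper to compare your argument against.

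That said, your argument is correct and is in the same spirit as Sankaran's original proof: restrict to integer sample points, extract a coarse direction of monotonicity from the lower quasi-isometry bound over windows of length $\sim M^2$, and then linearly interpolate a monotone sequence at bounded distance from $(h(n))_n$. Two small remarks. First, in the monotonicity step your stated contradiction ``$|h(n_0+k_0)-h(n_0)|<4M$'' is slightly loosely phrased; what the triangle inequality actually gives is that $h(n_0+k_0)-h(n_0)$ and $h(n_0+1+k_0)-h(n_0+1)$ differ by at most $4M$, whereas having opposite signs with absolute value $\ge 4M$ forces them to differ by at least $8M$ --- the contradiction is there, just not exactly as written. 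Second, the running-maximum envelope $b_n=\max_{m\le n}\bigl(a_m+\varepsilon(n-m)\bigr)$ is a nice device; you should remark (as you implicitly use) that the maximum is attained, which follows from the same block estimate you invoke for $b_n-a_n\le C_1$. With those cosmetic points noted, the proof goes through.
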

\subsection{Left-orderable and locally indicable group}  
	\begin{defn}
		\textit{A group $G$ is left-orderable if it admits a total order $\leq$ on $G$ such that $g\leq h$ implies $fg\leq fh$ for any $f\in G$.}
	\end{defn}
	Clearly, every subgroup of a left-orderable group is left-orderable. More interestingly, an arbitrary product $\Gamma$ of left-orderable groups $\Gamma_{\lambda}$ is left-orderable. All torsion-free abelian groups, free groups, braid groups, the group Homeo$_{+}(\mathbb{R})$ of orientation-preserving homeomorphisms of the line, the fundamental groups of orientable surfaces and $\mathcal{G}_\infty$ (the group of germs at $\infty$ of homeomorphisms of $\mathbb{R}$) are some examples of left-orderable groups \cite{Mann}.
	
	It can be observed from the definition that any finite group is non left-orderable. An significant result due to Morris-Witte \cite{Deroin Navas Rivas} establishes that finite index subgroups of SL$(n,\mathbb{Z})$ are non left-orderable for $n\geq 3$.

	If $\leq $ is a left order in a group $G$, then $f\in G$ is said to be positive (resp. negative) if $f>id$ (resp. $f<id$). From this left order $\leq$, we may define an order $\leq ^{*}$ by letting $f\leq ^{*}g$ whenever $f^{-1}>g^{-1}$. Then the order $\leq ^{*}$ turns out to be right-invariant. One can certainly go the other way around, producing left-orders from right-orders. Consequently, a group is left-orderable if and only if it is right-orderable. We refer the reader to \cite{Deroin Navas Rivas} for a detailed description related to the left-orderablility of a group.\\
	A link between orders and dynamics comes from the following theorem relating left-invariant orders to actions on the line.
	\begin{Thm} (see \cite{Mann} and Theorem 6.8 in \cite{Ghys})\label{2.2}
		Let $G$ be a countable group. Then $G$ is left-orderable if and only if there is an injective homomorphism $G\rightarrow$ Homeo$_{+}(\mathbb{R})$. Moreover, given an order on $G$, there is a canonical (up to conjugacy in Homeo$_{+}(\mathbb{R})$) injective homomorphism $G\rightarrow$ Homeo$_{+}(\mathbb{R})$ called a dynamical realization.
	\end{Thm}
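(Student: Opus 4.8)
The plan is to prove the two implications separately and then treat the canonical realization. The implication ``$\Leftarrow$'' is immediate from facts already recorded above: Homeo$_{+}(\mathbb{R})$ is left-orderable and every subgroup of a left-orderable group is left-orderable, so an injective homomorphism $G\to$ Homeo$_{+}(\mathbb{R})$ identifies $G$ with a subgroup and hence makes $G$ left-orderable. If one wants the order explicitly, I would fix an enumeration $\{q_n\}$ of a countable dense subset of $\mathbb{R}$ and declare $f\prec g$ in Homeo$_{+}(\mathbb{R})$ when $f(q_n)<g(q_n)$ at the least $n$ with $f(q_n)\neq g(q_n)$; left-invariance holds because any $h\in$ Homeo$_{+}(\mathbb{R})$ is strictly increasing, so it preserves the sign of $f(q_n)-g(q_n)$ and fixes the index of first disagreement. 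Restricting this order to the image of $G$ gives the claim.

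For ``$\Rightarrow$'' I would construct a dynamical realization. We may assume $G$ nontrivial; being left-ordered it is torsion-free, hence infinite, and has no $\preceq$-maximum or minimum (if $m$ were maximal, then for $a\succ e$ left-invariance gives $ma\succ m$). Enumerate $G=\{g_0=e,g_1,g_2,\dots\}$ and define an order embedding $t:(G,\preceq)\to(\mathbb{R},<)$ recursively: set $t(g_0)=0$; having placed $g_0,\dots,g_{n-1}$, put $t(g_n)=\max_i t(g_i)+1$ if $g_n$ exceeds all earlier elements, $t(g_n)=\min_i t(g_i)-1$ if it lies below all of them, and otherwise $t(g_n)=\tfrac12\big(t(g_i)+t(g_j)\big)$, where $g_i\prec g_n\prec g_j$ are the $\preceq$-neighbours of $g_n$ among the already placed elements. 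Since $\preceq$ has no extreme elements, the running $\preceq$-maxima (resp. minima) strictly increase (resp. decrease) infinitely often, so $t(G)$ is unbounded above and below.

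Next I would extend the self-action. Left multiplication $L_g:h\mapsto gh$ preserves $\preceq$ by left-invariance, so through $t$ it yields an order-preserving bijection of $t(G)$ sending $t(h)$ to $t(gh)$. I would extend this to $\varphi(g)\in$ Homeo$_{+}(\mathbb{R})$ by extending monotonically to the closure $\overline{t(G)}$ and interpolating affinely across each complementary gap; cofinality of $t(G)$ guarantees all gaps are bounded, so $\varphi(g)$ is a genuine increasing homeomorphism of $\mathbb{R}$. That $\varphi$ is a homomorphism is checked first on $t(G)$, where $\varphi(g)\varphi(h)\big(t(k)\big)=t(ghk)=\varphi(gh)\big(t(k)\big)$; both maps are continuous, so they agree on $\overline{t(G)}$, and on each gap both are affine with the same endpoint values, hence coincide there too. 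Injectivity is immediate, since $\varphi(g)\big(t(e)\big)=t(g)$ forces $t(g)=t(e)$, whence $g=e$, whenever $\varphi(g)=\mathrm{id}$.

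Finally, for the ``moreover'' part I would show that any two realizations $\varphi_1,\varphi_2$ built from the same order, with basepoint orbits $t_1(G),t_2(G)$, are conjugate: define $h$ by $h\big(t_1(g)\big)=t_2(g)$, extend monotonically to the closure and affinely on gaps, and verify $h\circ\varphi_1(g)=\varphi_2(g)\circ h$ on $t_1(G)$ (both send $t_1(k)$ to $t_2(gk)$) and hence everywhere by the same closure-plus-gap argument. I expect the principal obstacle to be exactly this uniqueness together with the well-definedness of the extension: one must argue that affine interpolation is compatible with composition, so that $\varphi$ lands in a \emph{group} of homeomorphisms rather than a mere set of monotone maps, and that the dependence on the enumeration and the placement rule is washed out after conjugation. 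The cofinality and no-extreme-element observations are the features that prevent the construction from degenerating and must be monitored throughout.
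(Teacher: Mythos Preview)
The paper does not prove this statement; it is quoted in the preliminaries as a known result with references to \cite{Mann} and \cite{Ghys}, so there is no in-paper proof to compare against. Your argument is the standard dynamical-realization construction (essentially the one in Ghys), and it is correct in outline: the lexicographic order on Homeo$_{+}(\mathbb{R})$ via a dense sequence handles ``$\Leftarrow$'', the recursive midpoint embedding $t:(G,\preceq)\hookrightarrow(\mathbb{R},<)$ together with affine interpolation on the gaps of $\overline{t(G)}$ handles ``$\Rightarrow$'', and the conjugacy is obtained by matching orbits and interpolating. Your observation that each $\varphi(h)$ sends gaps of $\overline{t(G)}$ to gaps (because it permutes $t(G)$, hence $\overline{t(G)}$) is exactly what makes $\varphi(g)\circ\varphi(h)$ affine on each gap and forces agreement with $\varphi(gh)$ there; that is the point you flagged as the main obstacle, and you have it right. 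The only place where a reader might want one more sentence is the extension of the order-bijection of $t(G)$ to $\overline{t(G)}$: it is worth saying explicitly that a monotone bijection between two subsets of $\mathbb{R}$ with the same closure extends uniquely and continuously to that closure, which is immediate from monotone convergence.
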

In \cite{Navas}, Navas came up with a useful criterion for left-orderablility of a group, which is given below.
\begin{Lem}
	A group $G$ is left-orderable if and only if for every collection of nontrivial elements $g_1,g_2,...,g_k$, there exist choices $\epsilon_i\in\{1,-1\}$ such that the identity is not an element of the semigroup generated by $\{g_i^{\epsilon_i},i=1,2,...,k\}$.
\end{Lem}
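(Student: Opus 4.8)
The plan is to run both implications through the standard dictionary between left orders and positive cones. Recall that a left order $\leq$ on $G$ corresponds to its \emph{positive cone} $P=\{g\in G: g>id\}$, which satisfies $P\cdot P\subseteq P$, $P\cap P^{-1}=\emptyset$ and $G=P\sqcup\{id\}\sqcup P^{-1}$; conversely, any $P\subseteq G$ with these three properties defines a left order by $a<b\iff a^{-1}b\in P$, left-invariance being immediate from $(ca)^{-1}(cb)=a^{-1}b$. The first property is just left-invariance restated: if $a,b\in P$ then $id<b$ gives $a=a\,id<ab$, and with $id<a$ transitivity yields $ab\in P$; note also $id\notin P$.

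For the forward implication, assume $G$ is left-orderable with positive cone $P$, and let $g_1,\dots,g_k$ be nontrivial. Put $\epsilon_i=1$ if $g_i\in P$ and $\epsilon_i=-1$ if $g_i\notin P$ (equivalently $g_i^{-1}\in P$). Then each $g_i^{\epsilon_i}$ lies in $P$, so the sub-semigroup they generate is contained in $P$; since $id\notin P$, the identity is not in this semigroup.

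The reverse implication is the substantive part. Assuming the combinatorial condition, I would first fix for every finite subset $F\subseteq G\setminus\{id\}$ a sign function witnessing the condition on $F$, and then glue these into a single function $\epsilon\colon G\setminus\{id\}\to\{1,-1\}$ such that the sub-semigroup generated by $\{g^{\epsilon(g)}:g\neq id\}$ still omits $id$. The gluing is a compactness argument: equip $\{1,-1\}^{G\setminus\{id\}}$ with the product topology, compact by Tychonoff, and for each finite $F$ let $C_F$ be the (closed, nonempty) set of $\epsilon$ whose restriction to $F$ witnesses the condition. Since a sub-semigroup of a semigroup not containing $id$ also avoids $id$, one has $C_{F_1\cup\cdots\cup F_n}\subseteq C_{F_1}\cap\cdots\cap C_{F_n}$, so the $C_F$ enjoy the finite intersection property and $\bigcap_F C_F\neq\emptyset$; pick $\epsilon$ there. (For countable $G$ one can instead enumerate the nontrivial elements and select signs inductively.) Now let $P$ be the semigroup generated by $\{g^{\epsilon(g)}:g\neq id\}$: it is a semigroup by construction; $id\notin P$ because a relation $id=g_{i_1}^{\epsilon(g_{i_1})}\cdots g_{i_m}^{\epsilon(g_{i_m})}$ uses only finitely many generators and would contradict $\epsilon\in C_F$ for $F=\{g_{i_1},\dots,g_{i_m}\}$; $G=P\cup P^{-1}\cup\{id\}$ since $g^{\epsilon(g)}\in P$ forces $g\in P$ or $g^{-1}\in P$; and $P\cap P^{-1}=\emptyset$ since $g,g^{-1}\in P$ would give $id=gg^{-1}\in P$. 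Hence $P$ is a positive cone and $G$ is left-orderable.

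I expect the gluing step to be the only real obstacle: one must pass from ``for each finite $F$ there is a good sign choice'' to ``there is one globally good sign choice'', which is precisely the point where Tychonoff compactness (or, for countable groups, a careful inductive construction) is invoked, and its correctness rests on the elementary observation that the no-identity property of a semigroup passes to sub-semigroups. Once the global $\epsilon$ is produced, checking the positive cone axioms for $P$ is routine.
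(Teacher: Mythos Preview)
Your argument is correct. The forward direction matches what the paper says (``it is clear''): choose $\epsilon_i$ so that $g_i^{\epsilon_i}>id$, and the semigroup they generate lies in the positive cone, which excludes $id$. For the reverse direction, your Tychonoff compactness argument on $\{1,-1\}^{G\setminus\{id\}}$ is the standard one and is carried out cleanly; the verification that the resulting $P$ is a positive cone is routine and correct.

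The only point of comparison worth noting is that the paper does \emph{not} actually prove the sufficiency direction: it simply states that ``much effort is needed'' and refers the reader to Prop.~1.4 of Navas \cite{Navas}. Your compactness-and-positive-cone argument is precisely the approach taken there, so you have supplied the proof that the paper outsources. One small remark: you should check (as you implicitly do) that each $C_F$ is nonempty---this is exactly the hypothesis applied to the finite set $F$---and that it is closed, which holds since membership in $C_F$ depends only on $\epsilon|_F$, hence $C_F$ is a finite union of basic clopen cylinders.
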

It is clear that this condition is necessary, that is, if $G$ is left-orderable, then we can choose $\epsilon_i\in\{-1,1\}$ such that $g_i^{\epsilon_i}>id$ holds for each $i$. But much effort is needed to prove the condition is sufficient; we refer the reader to Prop. 1.4 of \cite{Navas} for a proof. 
	\begin{defn}
		\textit{A group is called locally indicable if each of its finitely generated nontrivial subgroups admits a nontrivial homomorphism to $\mathbb{Z}$.}
	\end{defn}
	\begin{ex}
		A remarkable theorem independently obtained by Brodskii and Howie \cite{Deroin Navas Rivas} asserts that torsion-free, 1-relator groups are locally indicable. Also, all knot groups in $\mathbb{R}^3$ are locally indicable. The Braid groups $\mathbb{B}_3$ and $\mathbb{B}_4$ are locally indicable but the groups $\mathbb{B}_n$ fail to be locally indicable for $n\geq 5$. See \cite{Deroin Navas Rivas} for the examples given above.
	\end{ex}
	\begin{defn}
		A left-order $\leq $ on a group $G$ is said to be Conradian (or a $C$-order) if for all positive elements $f,g$ there exists $n\in\mathbb{N}$ such that $fg^n>g$.
	\end{defn}
	The most important theorem regarding $C$-orderable group is the next one proved in \cite{Deroin Navas Rivas}.
	\begin{Thm}\label{2.3}
		A group $G$ is $C$-orderable if and only if it is locally indicable.
	\end{Thm}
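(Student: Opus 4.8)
The plan is to establish both implications separately, since they require quite different tools. Write $\le$ for the order, and recall that a subgroup $C\le G$ is \emph{convex} if $1\le h\le c$ with $c\in C$ forces $h\in C$; in any left-ordered group the convex subgroups form a chain under inclusion, and the union of a chain of convex subgroups is again a convex subgroup.

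\textbf{$C$-orderable $\Rightarrow$ locally indicable.} Suppose $\le$ is a Conradian order on $G$ and let $H\le G$ be a nontrivial finitely generated subgroup; the restriction of $\le$ to $H$ is again Conradian, so it suffices to produce a nontrivial homomorphism $H\to\mathbb{Z}$. First I would exhibit a maximal proper convex subgroup $C$ of $H$: if the union $C$ of all proper convex subgroups were equal to $H$, then each of the finitely many generators $h_{1},\dots,h_{k}$ would lie in a proper convex subgroup, and by the chain property all of them would lie in a single one, say $C_{j}$, giving $H=\langle h_{1},\dots,h_{k}\rangle\le C_{j}\subsetneq H$, a contradiction; hence $C\subsetneq H$ is the largest proper convex subgroup. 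The heart of the argument is then the Conradian jump lemma: the defining inequality ``$fg^{n}>g$ for some $n$'' forces the top jump $C\triangleleft H$ to be normal and the induced order on $H/C$ to be \emph{archimedean}. By H\"{o}lder's theorem an archimedean ordered group embeds into $(\mathbb{R},+)$; since $H/C$ is finitely generated it is isomorphic to $\mathbb{Z}^{r}$ with $r\ge 1$ (as $C\neq H$), and composing $H\twoheadrightarrow H/C\twoheadrightarrow\mathbb{Z}$ yields the desired map.

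\textbf{Locally indicable $\Rightarrow$ $C$-orderable.} Here the plan is to construct a Conradian order directly, using the convex-series characterization of Conradian orders: a left-order is Conradian iff $G$ admits an ascending, well-ordered chain of convex subgroups $\{G_{\lambda}\}$ with $G_{0}=\{1\}$, $G=\bigcup_{\lambda}G_{\lambda}$, unions taken at limit stages, and each successive jump $G_{\lambda}\triangleleft G_{\lambda+1}$ normal with quotient order-isomorphic to a subgroup of $(\mathbb{R},+)$. It therefore suffices to build such a chain, which I would do by transfinite recursion: at a successor stage, apply local indicability to a finitely generated piece to obtain a surjection onto $\mathbb{Z}\le\mathbb{R}$, use it to define the next normal jump and to order the cosets it determines, and at limit stages take unions. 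Assembling lexicographically along the chain produces a left-order that is Conradian precisely because every jump quotient embeds in $\mathbb{R}$, hence is archimedean. Alternatively one can invoke the Conradian analogue of the Burns--Hale criterion, for which local indicability is exactly the hypothesis that every nontrivial finitely generated subgroup surjects onto $\mathbb{Z}\le\mathbb{R}$.

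\textbf{Main obstacle.} I expect the crux of the forward direction to be the Conradian jump lemma, namely deducing from the single inequality ``$fg^{n}>g$'' that the maximal convex subgroup is normal and the top quotient archimedean; this is where the Conradian hypothesis is genuinely used, since an arbitrary left-order need not have archimedean jumps. In the reverse direction the delicate point is the \emph{coherence} of the transfinite construction: one must check that the partial orders defined at successive stages fit together into a single left-order on all of $G$ and that the Conradian condition, being quantified over all pairs of positive elements, survives passage to unions at limit stages. Controlling this is exactly the purpose of the convex-series characterization, so the real work is in proving that characterization and then verifying that the $\mathbb{R}$-valued jumps supplied by local indicability assemble into an admissible chain.
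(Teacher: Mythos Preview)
The paper does not supply a proof of this theorem; it is quoted as a known result with a pointer to \cite{Deroin Navas Rivas}, so there is nothing in the paper to compare your argument against. Your sketch follows the classical line of that reference (and of Conrad's original work): for the forward implication, passing to a finitely generated subgroup, locating its maximal proper convex subgroup, invoking the Conradian jump lemma to get a normal archimedean top quotient, and finishing with H\"older is exactly the standard route, and you correctly flag the jump lemma as the substantive step.

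For the reverse implication your primary plan is underspecified. Saying ``at a successor stage, apply local indicability to a finitely generated piece to obtain a surjection onto $\mathbb{Z}$'' does not by itself produce the next convex jump in $G$: the quotient $G/G_{\lambda}$ need not be finitely generated, and a surjection defined only on a finitely generated subgroup does not canonically extend to a convex enlargement of $G_{\lambda}$ inside $G$. The alternative you mention---the Conradian Burns--Hale criterion---is the clean and standard way to run this direction, and it is how the cited source proceeds; if you commit to that, the coherence issue you worry about at limit stages evaporates, since the order is obtained by a compactness argument rather than by gluing partial orders along a transfinite chain.
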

	\section{Structure Theorem}
	It is shown that $QI(\mathbb{R_+})$ is not simple by providing a normal subgroup $H=\big\{[f]\in QI(\mathbb{R_+}): \displaystyle\lim_{x\rightarrow \infty}\frac {f(x)}{x}=1\big\}$ of $QI(\mathbb{R_{+}})$ in Theorem 2.2 of \cite{Ye Zhao}. Similarly, $QI(\mathbb{R}_{-})$ is not simple with a normal subgroup $H'=\big\{[f]\in QI(\mathbb{R}_{-}):\displaystyle\lim_{x\rightarrow \infty}\frac {f(x)}{x}=1\big\}$. Then, $\overline{H}=H\times H'$ is also a normal subgroup of $QI^{+}(\mathbb{R})$, where $QI^{+}(\mathbb{R})$ is the orientation preserving quasi-isometries of $\mathbb{R}$. It can be readily checked that $\overline{H}:=\big\{[f]\in QI^{+}(\mathbb{R}): \displaystyle\lim_{|x|\rightarrow \infty}\frac {f(x)}{x}=1\big\}$ is a normal subgroup in $QI(\mathbb{R})$.\\
	The construction of $H$ motivates us to define a set $S_{[g]}$ for any $[g]\in QI(\mathbb{R_{+}})$ as given in the Introduction. This is an invariant that we associate to an element $[g]$. Using this set as an invariant, we give a characterization of the elements of the quotient group $QI(\mathbb{R_{+}})/H$. From here we denote $QI(\mathbb{R_{+}})$ by $G$ for convenience of notation. The following result gives topological characterization of $S_{[g]}$.
	\begin{Lem}\label{3.1}
		\textit{The set $S_{[g]}$ is a compact and connected subset of $\mathbb{R_+}$.}
	\end{Lem}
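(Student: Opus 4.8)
The plan is to identify $S_{[g]}$ with the cluster set of the function $h(x):=g(x)/x$ at infinity, to obtain compactness from the quasi-isometry inequalities together with the general fact that such a cluster set is closed, and to obtain connectedness from an intermediate value argument. First I would note that $S_{[g]}$ depends only on the class $[g]$: two equivalent representatives satisfy $|g_1(x)-g_2(x)|<C$, hence $|g_1(x)/x-g_2(x)/x|<C/x\to 0$, so their cluster sets coincide. This lets me pass to a convenient representative; using Theorem \ref{2.1} together with the fact that elements of $QI(\mathbb{R}_+)$ fix the negative part, I take $g$ to be an orientation-preserving $PL$-homeomorphism of $\mathbb{R}_+$ with $g(0)=0$ and slopes lying in some $(1/M,M)$, $M>1$. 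Then $h$ is continuous on $(0,\infty)$ and $x/M<g(x)<Mx$, so $1/M<h(x)<M$ for every $x>0$.

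In particular $S_{[g]}\subseteq[1/M,M]\subset(0,\infty)=\mathbb{R}_+$ is bounded, and it is nonempty by Bolzano--Weierstrass applied to $(h(n))_n$. For closedness I would verify the identity $S_{[g]}=\bigcap_{R>0}\overline{h([R,\infty))}$: the inclusion ``$\subseteq$'' is immediate from the definition, while for ``$\supseteq$'' one chooses, given $c$ in the right-hand set, points $x_n\in[\max(n,x_{n-1}+1),\infty)$ with $|h(x_n)-c|<1/n$, which yields $x_n\to\infty$ and $h(x_n)\to c$. Thus $S_{[g]}$ is an intersection of closed sets, hence closed, and being bounded it is compact by Heine--Borel.

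The core of the statement is connectedness, i.e.\ that $S_{[g]}$ is an interval. Let $a<b$ both lie in $S_{[g]}$, let $c\in(a,b)$, and fix $R>0$. Since $a\in S_{[g]}$ and $a<c$ there is $x_0>R$ with $h(x_0)<c$; since $b\in S_{[g]}$ and $b>c$ there is $x_1>x_0$ with $h(x_1)>c$. By continuity of $h$ on $[x_0,x_1]$ and the intermediate value theorem there is $t\in(x_0,x_1)$, hence $t>R$, with $h(t)=c$. Letting $R\to\infty$ produces $t_n\to\infty$ with $h(t_n)=c$, so $c\in S_{[g]}$; thus $S_{[g]}$ contains every point between two of its points and is an interval, hence connected.

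I expect the connectedness step to be the only genuine obstacle, and its resolution hinges on the observation that one is free to replace $g$ by a continuous ($PL$) representative before applying the intermediate value theorem. If instead one insists on working with an arbitrary representative, the same argument can be run in a discrete form, using the elementary bound $|h(x+1)-h(x)|=O(1/x)\to 0$ — which follows directly from the quasi-isometry inequality — in place of IVT to interpolate between a value of $h$ below $c$ and one above $c$.
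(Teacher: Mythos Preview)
Your proof is correct and follows essentially the same route as the paper: bound $g(x)/x$ using the quasi-isometry (or bounded-slope) inequalities, argue closedness sequentially, and pass to a continuous $PL$ representative via Theorem~\ref{2.1} so that the intermediate value theorem yields connectedness. Your identification $S_{[g]}=\bigcap_{R>0}\overline{h([R,\infty))}$ is a slightly cleaner packaging of the closedness step than the paper's direct diagonal argument, but the underlying idea is the same.
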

	\begin{proof}
		Without loss of generality, we assume that $g(0)=0$. Now, since $[g]$ is a quasi-isometry, there exists $K>1$ such that $\frac {1}{K}x-K\leq g(x)-g(0)\leq Kx+K$. Therefore, as $x>0$, $\frac {1}{K}-\frac {K}{x}\leq \frac {g(x)}{x}\leq K+\frac {K}{x}$. So for each sequence $\{x_n\}$, $\frac {g(x_n)}{x_n}$ has a convergent subsequence. Thus, $S_{[g]}$ is a nonempty bounded subset of $\mathbb{R_+}$.\\
		To show $S_{[g]}$ is compact, it remains to show $S_{[g]}$ is closed. \\
		Let $\{\alpha_n\}$ be a sequence in $ S_{[g]}$ such that $\alpha_n\rightarrow \alpha$. Then for each $m\in\mathbb{N}$ there exist $\alpha_m$ and $\epsilon_m>0$ such that $(\alpha_m-\epsilon_m,\alpha_m+\epsilon_m)\subset (\alpha-\frac {1}{m},\alpha+\frac {1}{m})$. Since $\alpha_m\in S_{[g]}$, we can find a sequence $\{y_m\}$ divergent to $\infty$ and $\frac {g(y_m)}{y_m}\in (\alpha_m-\epsilon_m,\alpha_m+\epsilon_m)$. Therefore, we obtain a sequence $\{z_k\}$ such that $\frac {g(z_k)}{z_k}\rightarrow \alpha$. \\
		Now we show that $S_{[g]}$ is a connected set. Let $\alpha,\beta\in S_{[g]}$. Then there exist two sequences $\{x_n\},\{y_n\}$ such that $\displaystyle\lim_{n\rightarrow\infty}\frac {g(x_n)}{x_n}=\alpha$ and $\displaystyle\lim_{n\rightarrow\infty}\frac {g(y_n)}{y_n}=\beta$.\\
		Let $\gamma\in \mathbb{R}$ with $\alpha<\gamma<\beta$. Then we choose $k\in \mathbb{N}$ satisfying $\alpha+\frac {1}{k}\leq \gamma\leq \beta-\frac {1}{k}$. Now, there exist two sequences, $\{x_{r_k}\}$ and $\{y_{r_k}\}$ such that \\
		$\alpha-\frac {1}{k}\leq \frac {g(x_{r_k})}{x_{r_k}}\leq \alpha+\frac {1}{k}$ and $\beta-\frac {1}{k}\leq \frac {g(y_{r_k})}{y_{r_k}}\leq \beta+\frac {1}{k}$. By \cite[Theorem 1.2]{Sankaran}, we can choose $g$ to be piecewise linear homeomorphism, so $\frac {g(x)}{x}$ is continious for $x>0$.\\
		Then there exists a sequence $\{z_{r_k}\}$ such that $\displaystyle\lim_{k\rightarrow\infty}\frac {g(z_{r_k})}{z_{r_k}}=\gamma$. Hence $\gamma\in S_{[g]}$ and consequently $S_{[g]}$ is connected. This completes the proof.
	\end{proof}
	\begin{Rem}
		(i) From the above lemma, it is clear that $S_{[g]}$ is either a singleton set or a closed and bounded interval. Note that $H_c\subset H_I$ and $QI(\mathbb{R_{+}})=\displaystyle\bigcup_{I\subset (0,\infty)} H_I$.\\
		(ii) It can be readily checked that $[g]\in H_c$ if and only if $\displaystyle\lim_{x\rightarrow\infty}\frac {g(x)}{x}=c$.\\
		(iii) From the previous point, it follows easily that $\overline{H_c}$ is a subgroup of $G$.  
	\end{Rem}
	Let $\mathbb{I}$ be the collection of all compact sub-intervals of $(0,\infty)$. Note that there is a natural map $\psi$: $\mathbb{R}_{>0}\times\mathbb{I}\rightarrow \mathbb{I}$ defined by $(c',[a,b])\mapsto[c'a,c'b].$ It is easy to see that the previous lemma induces a natural map $\phi:QI(\mathbb{R_{+}})\rightarrow \mathbb{I}.$ Now, we characterize the following properties of the map $\phi$ and $\psi$.
	
	\begin{Prop}
		$(i)$ We have the following commutative diagram--
		\begin{center}
			$\begin{CD}
				\overline{H_c}\times QI(\mathbb{R_{+}}) @>(\phi,\phi)>> \mathbb{R}_{>0}\times \mathbb{I}\\
				@VV\circ V @VV\psi V\\
				QI(\mathbb{R_{+}}) @>\phi>> \mathbb{I}
			\end{CD}$
		\end{center} 
		where $\circ$ is the composition of quasi-isometry maps. \\
		$(ii)$ The map $\phi$ is surjective.
	\end{Prop}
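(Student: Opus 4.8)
For part $(i)$ the plan is to reduce the square to the single equality $S_{[h\circ f]}=c\cdot S_{[f]}$, where for $[h]\in\overline{H_c}$ I write $c:=\lim_{x\to\infty}h(x)/x$. This $c$ is a well-defined positive real: on $\overline{H_c}=\bigcup_{c'>0}H_{c'}$ the invariant $S_{[h]}$ is the singleton $\{c\}$, and $\phi([h])\in\mathbb{R}_{>0}$ is precisely this $c$; also $c\cdot[a,b]:=[ca,cb]=\psi(c,[a,b])$. First I would note that $\phi$ is well defined on quasi-isometry classes, since $|g_1(x)-g_2(x)|<C$ forces $|g_1(x)/x-g_2(x)/x|\to 0$ and hence $S_{[g_1]}=S_{[g_2]}$. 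Then, as in the proof of Lemma \ref{3.1}, a quasi-isometry $f$ of $[0,\infty)$ satisfies $f(x)\ge K^{-1}x-K$, so $x_n\to\infty$ forces $f(x_n)\to\infty$ and therefore $h(f(x_n))/f(x_n)\to c$. From
\[
\frac{h(f(x_n))}{x_n}=\frac{h(f(x_n))}{f(x_n)}\cdot\frac{f(x_n)}{x_n}
\]
and $c\neq0$ it follows that $h(f(x_n))/x_n$ converges iff $f(x_n)/x_n$ does, with limits differing by the factor $c$; ranging over all admissible sequences gives $S_{[h\circ f]}=c\cdot S_{[f]}$, i.e. the diagram commutes. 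I expect this half to be routine.

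For part $(ii)$ I must show every $[a,b]\in\mathbb{I}$ lies in the image of $\phi$. By Theorem \ref{2.1} it suffices to exhibit $f\in PL_\delta(\mathbb{R})$ that is the identity on $(-\infty,0]$ (so that $[f]\in QI(\mathbb{R_+})$) with $S_{[f]}=[a,b]$. If $a=b$ I would take $f(x)=ax$ on $[0,\infty)$. If $a<b$, the plan is to fix $0=t_0<t_1<t_2<\cdots\to\infty$ with every ratio $t_{k+1}/t_k\ge 2b/a$, and let $f$ on $[0,\infty)$ be the PL-homeomorphism fixing $0$, of slope $b$ on $[0,t_1]$, and thereafter the linear interpolant determined by
\[
f(t_{2k})=a\,t_{2k}\ \ (k\ge1),\qquad f(t_{2k+1})=b\,t_{2k+1}.
\]
A short computation shows the slope on an interval where $f(x)/x$ rises from $a$ to $b$ equals $\tfrac{b\rho-a}{\rho-1}\in(b,2b]$ and on one where it falls from $b$ to $a$ equals $\tfrac{a\rho-b}{\rho-1}\in[a/2,a)$, with $\rho$ the relevant ratio $\ge 2b/a$; hence $\Lambda(f)\subseteq[a/2,2b]$ is bounded and $f\in PL_\delta(\mathbb{R})$.

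Finally I would verify $S_{[f]}=[a,b]$. On each rising piece the slope is $>a$, so $f(x)/x$ increases there from $a$ to $b$; on each falling piece the slope is $<b$, so $f(x)/x$ decreases from $b$ to $a$; thus $f(x)/x\in[a,b]$ for all $x>0$, while $f(t_{2k})/t_{2k}=a$ and $f(t_{2k+1})/t_{2k+1}=b$ show $a,b\in S_{[f]}$. Hence $S_{[f]}\subseteq[a,b]$ and contains both endpoints, so $S_{[f]}=[a,b]$ since $S_{[f]}$ is connected by Lemma \ref{3.1}, i.e. $\phi([f])=[a,b]$. The one point needing care in the whole argument is isolating a single band of slopes valid uniformly in $k$; once the two elementary slope estimates above are in hand, surjectivity of $\phi$ follows at once.
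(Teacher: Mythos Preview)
Your proof is correct, and in both parts it streamlines the paper's argument in ways worth recording.

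For part $(i)$, the paper proceeds by double inclusion: it first shows $[c'a,c'b]\subseteq\phi([f]\circ[g])$ by locating the two endpoints via sequences and then invoking the connectedness from Lemma~\ref{3.1}; the reverse inclusion is obtained by running the same estimate with $[f]^{-1}$ in place of $[f]$. Your route is more direct: since membership in $\overline{H_c}$ means the \emph{full} limit $\lim_{y\to\infty}h(y)/y=c$ exists (not merely a subsequential one), the factorization
\[
\frac{h(f(x_n))}{x_n}=\frac{h(f(x_n))}{f(x_n)}\cdot\frac{f(x_n)}{x_n}
\]
immediately gives a bijection between subsequential limits of $f(x_n)/x_n$ and those of $h(f(x_n))/x_n$, scaled by $c$. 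This yields $S_{[h\circ f]}=c\cdot S_{[f]}$ in one stroke, with no appeal to connectedness or to the inverse.

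For part $(ii)$, both arguments build an oscillating PL map. The paper uses the geometric grid $\{\lambda^n\}$ with break points at the midpoints and a specific $\lambda>\max\{1,\,a/(2b-a),\,2b/a-1\}$ engineered so that the two slopes are positive; it then checks that $g(\lambda^n)/\lambda^n\to b$ and $g(\tfrac{\lambda^n+\lambda^{n+1}}{2})/\tfrac{\lambda^n+\lambda^{n+1}}{2}\to a$. Your construction instead prescribes $f(t_k)/t_k$ to equal $a$ or $b$ exactly at the nodes, with the single ratio condition $t_{k+1}/t_k\ge 2b/a$ controlling the slopes uniformly. This buys you a cleaner verification of the containment $S_{[f]}\subseteq[a,b]$: because each linear piece starts with $f(x)/x$ equal to $a$ (resp.\ $b$) and has slope $>a$ (resp.\ $<b$), the ratio $f(x)/x$ is monotone on that piece and hence confined to $[a,b]$ throughout. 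The paper's corresponding containment is left more implicit.
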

	\begin{proof}
		$(i)$ Suppose $[f]\in \overline{H_c}$ and $[g]\in QI(\mathbb{R_{+}})$. Let $S_{[f]}=c',S_{[g]}=[a,b](\subset (0,\infty))$. \\
		Then there exists a sequence $\{x_n\}$ such that $x_n\rightarrow\infty $ and $\displaystyle\lim_{n\rightarrow\infty}\frac {g(x_n)}{x_n}=a$.\\ So, there exists a subsequence $\{x_{r_n}\}$ of $\{x_n\}$ such that \\ $\displaystyle\lim_{n\rightarrow\infty}\frac {f\circ g(x_{r_n})}{x_{r_n}}=\displaystyle\lim_{n\rightarrow\infty}\frac {f(g(x_{r_n}))}{g(x_{r_n})}~\frac {g(x_{r_n})}{x_{r_n}}=c'a$. So, $c'a\in\phi([f]\circ [g])$.
		\\Similarly, we can show that $c'b\in\phi([f]\circ [g])$. Then by using Lemma \ref{3.1}, we get, 
		\begin{equation}\label{p2 eq1}
			[c'a,c'b]\subset \phi([f]\circ [g]).
		\end{equation}
		To prove the other inclusion, assume  $\phi([f]\circ [g])=[a_1,b_1]$. Then by (\ref{p2 eq1}), we immediately get,
		\begin{equation}\label{p2 eq2}
			[c'^{-1}a_1,c'^{-1}b_1]\subset\phi([f]^{-1}\circ [f]\circ [g])=\phi[g].
		\end{equation}
		Then by (\ref{p2 eq1}) and (\ref{p2 eq2}), we have $\phi([f]\circ [g])=[c'a,c'b]$.\\
		$(ii)$ Let $[a,b]\in\mathbb{I}$. Now we want to construct an element $[g]\in QI(\mathbb{R_{+}})$ such that $S_{[g]}=[a,b]$.  For that, first, we choose $\lambda>max\{1,\frac {a}{2b-a},\frac {2b}{a}-1\}$ and define $g$ on $[0,1]$ as $g(x)=x$ and on $[\lambda^{n},\lambda^{n+1}]$ for all $n\geq 0$ as follows: 
		\begin{equation}
			g(x)=\begin{cases}
				g(\lambda^{n})+(\frac {\lambda+1}{\lambda-1}a-\frac {2b}{\lambda-1})(x-\lambda^n) , & \text{if $x\in [\lambda^n,\frac {\lambda^n+\lambda^{n+1}}{2}]$}\\
				g(\frac {\lambda^{n}+\lambda^{n+1}}{2})+(\frac {2\lambda b}{\lambda-1}-\frac {\lambda+1}{\lambda-1}a)(x-\frac {\lambda^n+\lambda^{n+1}}{2}), & \text{if $x\in  [\frac {\lambda^n+\lambda^{n+1}}{2},\lambda^{n+1}]$}.
			\end{cases}
		\end{equation}
		The function $g$ can be rewritten as follows:
		\begin{equation*}
			g(x)=\begin{cases}
				1+(\lambda^n-1)b+(\frac {\lambda+1}{\lambda-1}a-\frac {2b}{\lambda-1})(x-\lambda^n) , & \text{if $x\in [\lambda^n,\frac {\lambda^n+\lambda^{n+1}}{2}]$}\\
				1-b+\frac {a}{2}\lambda^{n}(\lambda+1)+(\frac {2\lambda b}{\lambda-1}-\frac {\lambda+1}{\lambda-1}a)(x-\frac {\lambda^n+\lambda^{n+1}}{2}), & \text{if $x\in  [\frac {\lambda^n+\lambda^{n+1}}{2},\lambda^{n+1}]$}.
			\end{cases}
		\end{equation*}
		Now we consider two sequences $\{x_n\}$ and $\{y_n\}$ such that ${x_n}=\lambda^{n}$ and ${y_n}=\frac {\lambda^n+\lambda^{n+1}}{2}$. Then it is clear from the definition of $g$ that $\frac{g(x_n)}{x_n}\rightarrow b$ and $\frac {g(y_n)}{y_n}\rightarrow a$. \\
		Since $\frac {\lambda+1}{\lambda-1}a-\frac {2b}{\lambda-1}<b,\frac {2\lambda b}{\lambda-1}-\frac {\lambda+1}{\lambda-1}a>a$, then from Lemma \ref{3.1}, we get $S_{[g]}=[a,b]$, that is, $\phi$ is surjective.
	\end{proof}
    Suppose $S_{[g]}=[a,b]$. Then we can choose an element $[f_{a^{-1}}]\in H_{a^{-1}}$ such that $S_{[f_{a^{-1}}\circ g]}=[1,a^{-1}b]$. Since $\overline{H_c}$ is a subgroup of $G$, so we can write the right coset of $G/\overline{H_c}$ in the following expression.
	\begin{align*}
		G/\overline{H_c}&=\bigsqcup_{M\geq 1}H_{[1,M]}/H\\
		&=\{*\}\bigsqcup\big\{\bigsqcup_{M> 1}H_{[1,M]}/H\}
	\end{align*}
	where $\{*\}$ stands for the trivial coset $H$ in $H_{[1,M]}/H$ and $H_{[1,M]}/H$ denotes the equivalence classes of $H_{[1,M]}$ where $H$ acts on $H_{[1,M]}$ as left multiplication.\\
	By Sankaran \cite{Sankaran}, we know that every equivalence class in $G=QI(\mathbb{R_{+}})$ can be represented by a piecewise linear homeomorphism with slopes, both bounded above and bounded away from 0. \\
	But it seems to be challenging to say when two piecewise-linear homeomorphisms lie in the same equivalence class. Instead, if we consider $H_{[1,M]}/H$, a  classification of equivalence classes in $H_{[1,M]}/H$ can be given in terms of the slopes of the representations from the piecewise-linear homeomorphisms.\\
	Suppose $[f],[g]\in H_{[1,M]}.$ If $H[f]=H[g]$ and if $\{a_n\}$ is a sequence such that $\{a_n\}\rightarrow\infty$ and $\displaystyle\lim_{n\rightarrow\infty}\frac {f(a_n)}{a_n}=1$, then $\displaystyle\lim_{n\rightarrow\infty}\frac {g(a_n)}{a_n}=1$. In other way, we can say that if no such sequence $\{a_n\}$ exists such that $\displaystyle\lim_{n\rightarrow\infty}\frac {f(a_n)}{a_n}=1$ and $\displaystyle\lim_{n\rightarrow\infty}\frac {g(a_n)}{a_n}=1$, then $H[f]\neq H[g]$. If there exists a sequence $\{a_n\}$ such that $\{a_n\}\rightarrow \infty$ and $\frac {f(a_n)}{a_n}\rightarrow 1,\frac {g(a_n)}{a_n}\rightarrow 1,$ then we provide some conditions on the slope of $f$ and $g$ to ensure the equality of two equivalence classes $H[f]$ and $H[g]$ and conversely. For this, we need certain notions, which is already defined in Introduction, before Theorem \ref{1.1}.\\

    To prove Theorem \ref{1.1}, we first need to observe that $||A_n-B_n||_{\infty}(=\{\max\limits_{1\leq i\leq  r}|\lambda_{n,i}-\lambda_{n,i}^{'}|\})\rightarrow 0$ if and only if for any $\epsilon>0$, there exists $N\in\mathbb{N}$ such that $|\lambda_i^n-\lambda_i^{'n}|<\epsilon$ for $n\geq N$. We also need the following result which gives an equivalent statement about the equality of two equivalence classes of $H_{[1,M]}/H$.
	\begin{Lem}{\label{3.5}}
		\textit{If $H[f],H[g]\in H_{[1,M]}/H$, then $f,g$ belong to the same equivalence class if and only if for every $\epsilon>0$, there exists an $M>0$ such that $\big|\frac {f(x)}{x}-\frac {g(x)}{x}\big|<\epsilon$, for all $x>M$.}
	\end{Lem}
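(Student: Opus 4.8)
The plan is to unwind the definition of the equivalence relation on $QI(\mathbb{R}_+)$ and the definition of the quotient by $H$, and to translate "$H[f] = H[g]$" into an analytic statement about $f(x)/x$ and $g(x)/x$. Recall that $H[f] = H[g]$ means $[g]^{-1}[f] \in H$, i.e. there is a representative $h$ of $[g^{-1}\circ f]$ (equivalently, the quasi-isometry $g^{-1}\circ f$ up to bounded distance) with $\lim_{x\to\infty} h(x)/x = 1$. So the first step is to record the bookkeeping: $g^{-1}$ may be taken as an honest $PL_\delta$-quasi-inverse of $g$, so that $g^{-1}\circ g$ and $g\circ g^{-1}$ are within a bounded distance $C$ of the identity, and all four maps $f, g, f^{-1}, g^{-1}$ are bi-Lipschitz on $[R,\infty)$ for some $R$ with a common constant $K$ (using Lemma \ref{3.1}, $S_{[f]}, S_{[g]} \subset [1,M]$, so slopes are pinched between $1/K$ and $K$, and in particular $f(x), g(x) \to \infty$).

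For the "only if" direction I would argue as follows. Suppose $H[f]=H[g]$, so $d(g^{-1}\circ f,\ \mathrm{id}) =: C_1 < \infty$ and $\lim_{x\to\infty}\frac{(g^{-1}\circ f)(x)}{x}=1$ — wait, more carefully: the element of $H$ is the class $[g^{-1}\circ f]$, and being in $H$ means any representative $u$ satisfies $u(x)/x\to 1$; since $g^{-1}\circ f$ is a representative, $(g^{-1}\circ f)(x)/x \to 1$. Now write
\begin{equation*}
\frac{f(x)}{x} - \frac{g(x)}{x} = \frac{g\big((g^{-1}\circ f)(x)\big) - g(x)}{x} + \frac{f(x) - g\big((g^{-1}\circ f)(x)\big)}{x}.
\end{equation*}
The second term is bounded by $K\, d(f,\ g\circ g^{-1}\circ f)/x \le K(C + K C_1)/x \to 0$ using that $g$ is $K$-Lipschitz and $g\circ g^{-1}$ is within $C$ of the identity (composed with $f$). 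For the first term, set $y = (g^{-1}\circ f)(x)$; then $y/x \to 1$ and $y \to \infty$, and the numerator is $g(y)-g(x)$, bounded in absolute value by $K|y-x|$, so the first term is at most $K\,|y/x - 1| \to 0$. Hence $\big|\frac{f(x)}{x}-\frac{g(x)}{x}\big| \to 0$, which is the claimed condition (with $M$ depending on $\epsilon$).

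For the "if" direction, suppose $\big|\frac{f(x)}{x}-\frac{g(x)}{x}\big|<\epsilon$ for all $x>M_\epsilon$. I want to show $(g^{-1}\circ f)(x)/x \to 1$, i.e. $[g^{-1}\circ f]\in H$. Apply $g^{-1}$, which is $K$-Lipschitz on $[R,\infty)$: for large $x$, $|g^{-1}(f(x)) - g^{-1}(g(x))| \le K|f(x)-g(x)| \le K\epsilon x$, and $g^{-1}(g(x))$ is within $C$ of $x$, so $|(g^{-1}\circ f)(x) - x| \le K\epsilon x + C$, giving $\big|\frac{(g^{-1}\circ f)(x)}{x} - 1\big| \le K\epsilon + C/x$. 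Since $\epsilon$ is arbitrary, $(g^{-1}\circ f)(x)/x \to 1$, so $[g^{-1}\circ f]\in H$ and $H[f]=H[g]$. The main obstacle — really the only subtlety — is the direction in which one must move a difference through a Lipschitz map: one needs $f, g$ to be genuinely (coarsely) bi-Lipschitz with uniform constants at infinity, which is where $H_{[1,M]}\subset \overline{H_c}$ and Lemma \ref{3.1} are essential, since they force the slopes of the $PL_\delta$ representatives to be bounded and bounded away from zero on a cofinite ray; one also has to be a little careful that the statement's "$M$" (a threshold $x>M$) is being overloaded with the "$M$" bounding the interval $[1,M]$, but this is purely cosmetic. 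I would also remark that the condition is exactly saying $f/x$ and $g/x$ are asymptotic, i.e. $\lim_{x\to\infty}\big(\frac{f(x)}{x}-\frac{g(x)}{x}\big)=0$, which is the form used in the proof of Theorem \ref{1.1}.
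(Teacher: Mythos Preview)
Your argument is correct, though a few bookkeeping remarks are slightly off (the constant $C_1$ is introduced and then abandoned after your self-correction, and the bi-Lipschitz constants for $f,g,g^{-1}$ come from the $PL_\delta$ representatives supplied by Theorem~\ref{2.1}, not from Lemma~\ref{3.1} or from $S_{[f]}\subset[1,M]$; $S_{[f]}$ controls the values of $f(x)/x$, not the local slopes). These do not affect the validity of the proof.

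The route, however, is genuinely different from the paper's. The paper works with the \emph{exact} inverse of the $PL$ representative $g$, sets $h=f\circ g^{-1}$ so that $f=h\circ g$ identically, and uses the multiplicative factorisation
\[
\frac{f(x)}{x}-\frac{g(x)}{x}=\frac{g(x)}{x}\Big(\frac{h(g(x))}{g(x)}-1\Big).
\]
In the forward direction this reduces immediately to $h\in H$ plus the boundedness of $g(x)/x$; in the converse, the hypothesis $[g]\in H_{[1,M]}$ is invoked to get $g(x)/x>\tfrac12$ eventually, and one divides through. Your approach instead treats $g^{-1}$ only as a quasi-inverse and controls everything by additive Lipschitz estimates: $|g(y)-g(x)|\le K|y-x|$ and $|g^{-1}(f(x))-g^{-1}(g(x))|\le K|f(x)-g(x)|$. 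This is a bit heavier on constants but has the advantage of not needing an exact inverse and of showing, in passing, that the hypothesis $[f],[g]\in H_{[1,M]}$ is not really used --- your argument works for any $[f],[g]\in QI(\mathbb{R}_+)$. The paper's version is shorter and more transparent for the purpose at hand; yours is the more ``coarse-geometric'' proof and would survive unchanged in settings where one cannot pass to honest homeomorphism representatives.
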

	\begin{proof}
		First, we let $H[f],H[g]$ belong to the same equivalence class. Then, $f=h\circ g$, for some $h\in H$. Then 
		\begin{align*}
			\bigg|\frac {f(x)}{x}-\frac {g(x)}{x}\bigg|=\bigg|\frac {h(g(x))}{x}-\frac {g(x)}{x}\bigg|=\bigg|\frac {h(g(x))}{g(x)}\frac {g(x)}{x}-\frac {g(x)}{x}\bigg|=\bigg|\frac {g(x)}{x}\bigg|~\bigg|\frac {h(g(x))}{g(x)}-1\bigg|.
		\end{align*}
		Since $g$ is a quasi-isometry, $\frac {g(x)}{x}$ is bounded, that is, there exists $M_1>0$ such that $\big|\frac {g(x)}{x}\big|<M_1$. As $h\in H$, then for $\epsilon>0$, there exists $M_2\in\mathbb{R}_{>0}$ such that $\big|\frac {h(g(x))}{g(x)}-1\big|<\frac {\epsilon}{M_1}$ for all $x\geq M_2$. Let $M'=max\{M_1,M_2\}$. Then from above we get,
		$\big|\frac {f(x)}{x}-\frac {g(x)}{x}\big|<\epsilon$, for all $x>M'$.\\
		To prove the converse part, we first write, $f=h'\circ g$. Now we show that $h'\in H$. Now,
		$\big|\frac {f(x)}{x}-\frac {g(x)}{x}\big|<\epsilon$ implies $\big|\frac {g(x)}{x}\big|\big|\frac {h(g(x))}{g(x)}-1\big|<\epsilon$. \\
		Note that, $\big|\frac {g(x)}{x}\big|>\frac {1}{2}$, for all $x>M_1'$, for some $M_1'\in\mathbb{N}$. If not, suppose there exists a sequence $\{x_n\}$ such that $\{x_n\}\rightarrow\infty$ and $\frac {g(x_n)}{x_n}\leq \frac {1}{2},$ so it has a subsequence $\{x_{r_n}\}$ such that $\displaystyle\lim_{n\rightarrow\infty}\frac {g(x_{r_n})}{x_{r_n}}\leq \frac {1}{2}$, which is absurd as $[g]\in H_{[1,M]}$.\\
		Since $\big|\frac {g(x)}{x}\big|>\frac {1}{2}$ for all $x>M_1'$, then by putting $g(x)=y$, we get, $\big|\frac {h(y)}{y}-1\big|<2\epsilon$ for all $y>M_1''$. Hence $h\in H$.
	\end{proof}
	We are now ready to prove Theorem \ref{1.1}.
	\subsection{Proof of Theorem \ref{1.1}}
	Let $x\in[a_n,a_{n+1}]$. Then we get two points of the partition of $[a_n,a_{n+1}]$ say $x_{n,i-1}$ and $x_{n,i}$ as discussed in Introduction so that $x_{n,i-1}\leq x\leq x_{n,i}$ and we can write, \\
	$f(x)=f(a_n)+\lambda_{n,1}(x_{n,1}-a_n)+\lambda_{n,2}(x_{n,2}-x_{n,1})+\ldots+\lambda_{n,i}(x-x_{n,i-1})$ and\\ $g(x)=g(a_n)+\lambda_{n,1}^{'}(x_{n,1}-a_n)+\lambda_{n,2}^{
	'}(x_{n,2}-x_{n,1})+\ldots+\lambda_{n,i}^{'}(x-x_{n,i-1})$. Therefore,
	\begin{align*}
		&\frac {f(x)-g(x)}{x}\\&=\frac {f(a_n)-g(a_n)+(\lambda_{n,1}-\lambda_{n,1}^{'})(x_{n,1}-a_n)+\ldots+(\lambda_{n,i}-\lambda_{n,i}^{'})(x-x_{n,i-1})}{x}\\&=\frac {f(a_n)-g(a_n)}{a_n}\frac {a_n}{x}+\frac {(\lambda_{n,1}-\lambda_{n,1}^{'})(x_{n,1}-a_n)+\ldots+(\lambda_{n,i}-\lambda_{n,i}^{'})(x-x_{n,i-1})}{x}.
	\end{align*}
	$(a)$ Since $||A_n-B_n||_{\infty}\rightarrow 0$, from above we get,
	\begin{align*}
		&\Big|\frac {f(x)}{x}-\frac {g(x)}{x}\Big|\\&\leq\Big|\frac {f(a_n)-g(a_n)}{a_n}\Big|\Big|\frac {a_n}{x}\Big|+\frac {|\lambda_{n,1}-\lambda_{n,1}^{'}|(x_{n,1}-a_n)+\ldots+|\lambda_{n,i}-\lambda_{n,i}^{'}|(x-x_{n,i-1})}{|x|}\\&\leq \epsilon+\frac {\epsilon(x_{n,1}-a_n)+\ldots+\epsilon(x-x_{n,i-1})}{x}, ~\text{for}~n\geq N, ~\text{for~some~}n\in\mathbb{N}~\text{and}\\&\text{since }\frac {f(a_n)}{a_n},\frac {g(a_n)}{a_n}\rightarrow 1.~\text{Hence},
	\end{align*}
	\begin{align*}
		\Big|\frac {f(x)}{x}-\frac {g(x)}{x}\Big|\leq \epsilon+\epsilon\frac {x-a_n}{x}<2\epsilon.
	\end{align*}
	So, by Lemma \ref{3.5}, we have $H[f]=H[g]$.\\
	$(b)$ Since $H[f]=H[g]$, by Lemma \ref{3.5}, for every $\epsilon>0$, there exists $M>0$ such that $\big|\frac {f(x)}{x}-\frac {g(x)}{x}\big|<\epsilon$, for all $x>M$.\\
	After putting $x=x_{n,i}$ in the above inequality, we get $|A+B|<\epsilon$, where $A=$
	\begin{align*}
		\frac {f(a_n)-g(a_n)}{a_n}\frac {a_n}{x_{n,i}}+\frac {(\lambda_{n,1}-\lambda_{n,1}^{'})(x_{n,1}-a_n)+\ldots+(\lambda_{n,i-1}-\lambda_{n,i-1}^{'})(x_{n,i-1}-x_{n,i-2})}{x_{n,i}},
	\end{align*}
	\begin{align*}
		\text{and}~	B=\frac {(\lambda_{n,i}-\lambda_{n,i}^{'})(x_{n,i}-x_{n,i-1})}{x_{n,i}}.
	\end{align*}
	Now, the following four cases may arise:-\\
	Case I: Let $A,B>0$. Then $B<\epsilon$, so
	\begin{align*}
		\frac {(\lambda_{n,i}-\lambda_{n,i}^{'})(x_{n,i}-x_{n,i-1})}{x_{n,i}}<\epsilon.
	\end{align*}
	Since $x_{n,i}>K_1x_{n,i-1}$, we have $1-\frac {x_{n,i-1}}{x_{n,i}}>1-\frac {1}{K_1}.$ Thus from above we get,\\
	$\lambda_{n,i}-\lambda_{n,i}^{'}<\frac {\epsilon}{K_2}$, where $K_2=1-\frac {1}{K_1}$.\\
	Case II: Let $A,B<0$. Then $-B<\epsilon$, so
	\begin{align*}
		-\frac {(\lambda_{n,i}-\lambda_{n,i}^{'})(x_{n,i}-x_{n,i-1})}{x_{n,i}}<\epsilon.
	\end{align*}
	Then arguing as Case I, we get, $-(\lambda_{n,i}-\lambda_{n,i}^{'})<\frac {\epsilon}{K_2}$.\\
	Case III: Let $A<0,B>0$. Then, $B<\epsilon-A$. We have, $-A=$\\
	\begin{align*}
		-\frac {f(a_n)-g(a_n)+(\lambda_{n,1}-\lambda_{n,1}^{'})(x_{n,1}-a_n)+..+(\lambda_{n,i-1}-\lambda_{n,i-1}^{'})(x_{n,i-1}-x_{n,i-2})}{x_{n,i-1}}\frac {x_{n,i-1}}{x_{n,i}}.
	\end{align*} 
	Now,
	\begin{align*}
		&-\frac {f(a_n)-g(a_n)+(\lambda_{n,1}-\lambda_{n,1}^{'})(x_{n,1}-a_n)+..+(\lambda_{n,i-1}-\lambda_{n,i-1}^{'})(x_{n,i-1}-x_{n,i-2})}{x_{n,i-1}}\\&=-\frac {f(x_{n,i-1})-g(x_{n,i-1})}{x_{n,i-1}}<\epsilon~,\text{by~our~assumption}.
	\end{align*}
	So, $-A<\epsilon\frac {x_{n,i-1}}{x_{n,i}}<\frac {\epsilon}{K_1}$ and hence $B<\epsilon+\frac {\epsilon}{K_1}$. Thus, $\lambda_{n,i}-\lambda_{n,i}^{'}<\frac {\epsilon+\frac {\epsilon}{K_1}}{K_2}$.\\
	Case IV: Let $A>0,B<0$. Then arguing as Case III, one can show, $-B<\frac {\epsilon+\frac {\epsilon}{K_1}}{K_2}$.\\
	Combining four cases, $|\lambda_{n,i}-\lambda_{n,i}^{'}|<\epsilon_1$, where $\epsilon_1=max\Big\{\frac {\epsilon}{K_2},\frac {\epsilon+\frac {\epsilon}{K_1}}{K_2}\Big\}$.\\
	Hence, $||A_n-B_n||_{\infty}\rightarrow 0$ as $n\rightarrow\infty$.
	\begin{Rem}
		In Theorem \ref{1.1}, $||A_n-B_n||_{\infty}\rightarrow 0$ is a sufficient condition for $[f]$ and $[g]$ belonging to the same equivalence class of $H_{[1,M]}$, but not the necessary condition. In the following example, we show that the condition of Theorem \ref{1.1}, $(b)$ can not be relaxed.
	\end{Rem} 
	\begin{ex}
		Let $f,g:\mathbb{R^{+}}\rightarrow \mathbb{R^{+}} $ be defined by $f(x)=x$ and 
		\begin{align*}
			g(x)&=\frac {x}{2},x\in[0,1]\\&=g(2^n)+\frac {1}{2}(x-2^n), ~~x\in[2^n,2^{n}+1]~\\
			&=g(2^n)+\frac {1}{2}+\Big(\frac {1}{2}+\frac {1}{2^2}+...+\frac {1}{2^{n+2}}\Big)(x-2^n-1),~~x\in[2^n+1,2^{n+1}].
		\end{align*}
		First, we show that $f$ and  $g$ are not quasi-isometrically equivalent. It will be sufficient to prove that  $f(2^n+1)-g(2^n+1)>\frac {3}{4}+\frac {1}{2}+\frac {1}{2}+\ldots+\frac {1}{2}$ ($n$ times).
		One can easily check that $f(3)-g(3)>\frac {3}{4}+\frac {1}{2}.$\\
		Now, we assume that the result is true for $n-1$. Then,
		\begin{align*}
			&f(2^n+1)-g(2^n+1)\\&=2^n+1-g(2^{n-1}+1)-\Big(\frac {1}{2}+\frac {1}{2^2}\ldots+\frac {1}{2^{n+2}}\Big)(2^n-2^{n-1}-1)\\&=f(2^{n-1}+1)-g(2^{n-1}+1)+(2^n-2^{n-1})-\Big(\frac {1}{2}+\frac {1}{2^2}\ldots+\frac {1}{2^{n+2}}\Big)(2^n-2^{n-1}-1)\\&>\frac {3}{4}+\frac {1}{2}+\ldots+\frac {1}{2} ~((n-1)~\text{times})+(2^n-2^{n-1})\Big[1-\Big(\frac {1}{2}+\frac {1}{2^2}\ldots+\frac {1}{2^{n+2}}\Big)\Big]\\&+\Big(\frac {1}{2}+\ldots+\frac {1}{2^{n+2}}\Big),~\text{by~using~induction}\\&>\frac {3}{4}+\frac {1}{2}+\ldots+\frac {1}{2} ~(n~\text{times}).
		\end{align*}
		To show $[f]$ and $[g]$  belong to the same equivalence class of $H_{[1,M]}$, it is enough to prove that $\frac {g(x)}{x}\rightarrow 1$ as $x\rightarrow\infty$ due to Lemma \ref{3.5} and the fact that $\frac {f(x)}{x}\rightarrow 1$ as $x\rightarrow\infty$.\\
		Let $\epsilon_n=1-\frac {g(2^n)}{2^n}$. Now, we can write,
		\begin{align*}
			2^{n+1}-g(2^{n+1})=2^{n+1}-g(2^n)-\frac {1}{2}-\Big(\frac {1}{2}+\frac {1}{2^2}+\ldots+\frac {1}{2^{n+2}}\Big)(2^{n+1}-2^n-1).
		\end{align*}
	Thus,
		\begin{align*}
			\epsilon_{n+1}=1-\frac {g(2^{n+1})}{2^{n+1}}& =1-\frac {1-\epsilon_n}{2}-\frac {1}{2^{n+2}}-\frac {(2^n-\frac {2^n}{2^{n+2}}-1+\frac {1}{2^{n+2}})}{2^{n+1}}\\&=\frac {\epsilon_n}{2}+\frac {1}{2^{n+3}}-\frac {1}{2^{2n+3}}+\frac {1}{2^{n+2}}.
		\end{align*}
		Then from above, we have $\epsilon_n-\epsilon_{n+1}= \frac {\epsilon_n}{2}-\frac {1}{2^{n+3}}+\frac {1}{2^{2n+3}}-\frac {1}{2^{n+2}}$.\\
		If $\epsilon_n\geq\frac {1}{2^{n+2}}-\frac {1}{2^{2n+2}}+\frac {1}{2^{n+1}}$, then we get,
		$\epsilon_{n+1}\geq\frac {1}{2^{(n+1)+2}}-\frac {1}{2^{2(n+1)+2}}+\frac {1}{2^{(n+1)+1}}$ and hence we can write that $\epsilon_{n+m}\geq\frac {1}{2^{(n+m)+2}}+\frac {1}{2^{(n+m)+1}}-\frac {1}{2^{2(n+m)+2}}$.\\
		Suppose there exists $N\in\mathbb{N}$ such that 
		$\epsilon_n<\frac {1}{2^{n+2}}-\frac {1}{2^{2n+2}}+\frac {1}{2^{n+1}}$ for all $n$. Then $\epsilon_n\rightarrow 0$ as $n\rightarrow\infty$.  Otherwise, there is a natural number $n_1>N$ such that\\
		$\epsilon_{n_1}\geq\frac {1}{2^{n_1+2}}-\frac {1}{2^{2n_1+2}}+\frac {1}{2^{n_1+1}}$.\\ From the above observation, $\epsilon_{n_1+m}\geq\frac {1}{2^{(n_1+m)+2}}+\frac {1}{2^{(n_1+m)+1}}-\frac {1}{2^{2(n_1+m)+2}}$. \\
		So, $\epsilon_{n_1}\geq \epsilon_{n_1+1}\geq\epsilon_{n_1+2}\geq\ldots\ldots$. \\Hence $\{\epsilon_n\}_{n\geq n_1}$ is  monotonically decreasing and bounded sequence, so is convergent and the limit is $0$. Therefore, $\frac {g(2^n)}{2^n}\rightarrow 1$ as $n\rightarrow\infty$.\\
		Since there exists a natural number $N$ such that $0<1-\frac {g(2^n)}{2^n}<\epsilon$ for all $n\geq N$,\\ we have,
		\begin{align*}
			1-\frac {g(2^n+1)}{2^n+1}&<1+(\epsilon-1)\frac {2^n}{2^n+1}-\frac {1}{2^{n+1}+2}\\&=\frac {1}{2^n+1}+\epsilon\frac {2^n}{2^n+1}-\frac {1}{2^{n+1}+2}\\&<\epsilon+\epsilon=2\epsilon.
		\end{align*}
		Therefore, $\frac {g(2^n+1)}{2^n+1}\rightarrow 1$ as $n\rightarrow\infty$. \\
		Then it can be readily seen that $\frac {g(x)}{x}\rightarrow 1$ as $x\rightarrow\infty$. Also note that $\frac {2^n+1}{2^n}\ngtr K_1$ for some $K_1>1$, so the first hypothesis in Theorem \ref{1.1} $(b)$ as well as the conclusion are not satisfied.
	\end{ex}
	We will apply Theorem \ref{1.1} to achieve a result on when two specific classes of quasi-isometries of positive real line commute in the form of Theorem \ref{5.4}.\\ Apart from giving necessary and sufficient condition for two elements of $H_{[1,M]}/H$ being equal, below we show that $H_{[1,M]}/H$ contains $C_{[0,1]}^{[1,M]}$, where $C_{[0,1]}^{[1,M]}$ denotes the set of all continuously differentiable functions $f:[0,1]\rightarrow [1,M]$ with $f(0)=1,f(1)=M$ and $f'(x)\neq 0$ for all $x\in[0,1]$.
	\begin{Prop}\label{3.7}
		The set $C_{[0,1]}^{[1,M]}$ is embedded in $H_{[1,M]}/H$.
	\end{Prop}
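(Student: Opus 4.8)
The plan is to realise each $f\in C_{[0,1]}^{[1,M]}$ as the $H$-coset of an explicit quasi-isometry $g_f$ of $\mathbb{R}_+$ whose ratio $g_f(x)/x$, read along a \emph{fixed} sequence of multiplicative scales, reproduces $f$ exactly; injectivity of $f\mapsto H[g_f]$ (which is all ``embedded'' should mean here, the target being a set of $H$-cosets) is then immediate from Lemma~\ref{3.5}. Observe first that since $f'$ is continuous, nowhere zero, and $f(0)=1<M=f(1)$, we have $f'>0$ on $[0,1]$, so $f$ is a strictly increasing bijection $[0,1]\to[1,M]$.

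For the construction I would fix once and for all a universal, rapidly increasing scale sequence $1=\mu_0<\mu_1<\mu_2<\cdots$ with $\mu_{n+1}/\mu_n\to\infty$ (e.g.\ $\mu_n=e^{n(n+1)/2}$, so $\mu_{n+1}/\mu_n=e^{\,n+1}$), and pick $k_0=k_0(f)$ so large that $\log(\mu_{2k+1}/\mu_{2k})>\|f'\|_{\infty}$ for all $k\ge k_0$ (possible since $\log(\mu_{2k+1}/\mu_{2k})\to\infty$). Define a continuous ``profile'' $\phi\colon[1,\infty)\to[1,M]$ by $\phi\equiv1$ on $[1,\mu_{2k_0}]$; on an ``up-block'' $[\mu_{2k},\mu_{2k+1}]$ with $k\ge k_0$, writing $t=(\log x-\log\mu_{2k})/\log(\mu_{2k+1}/\mu_{2k})\in[0,1]$, set $\phi(x)=f(t)$; and on a ``down-block'' $[\mu_{2k+1},\mu_{2k+2}]$, with $t$ defined analogously, set $\phi(x)=f(1-t)$. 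The one-sided values agree at every $\mu_n$ (they are $1$ at even $n\ge 2k_0$ and $M$ at odd $n$), so $\phi$ is continuous; put $g_f(x)=x\,\phi(x)$, so that $g_f(x)=x$ on $[0,\mu_{2k_0}]$.

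Next I would verify that $g_f$ is a genuine element of $H_{[1,M]}$ and that the assignment is injective. From $1\le\phi\le M$ one gets $x\le g_f(x)\le Mx$, whence $g_f(0)=0$, $g_f$ is continuous, and $g_f(x)\to\infty$; on each block the chain rule gives $g_f'(x)=f(t)\pm f'(t)/\log(\mu_{2k+1}/\mu_{2k})$ (sign according to parity), which by the choice of $k_0$ lies in a fixed interval $[c,M+1]$ with $c=1-\|f'\|_{\infty}/\log(\mu_{2k_0+1}/\mu_{2k_0})>0$. Hence $g_f$ is a strictly increasing bi-Lipschitz self-homeomorphism of $[0,\infty)$, so $[g_f]\in QI(\mathbb{R}_+)$; and since $g_f(x)/x=\phi(x)$ runs continuously over all of $[1,M]$ on each up-block while never leaving $[1,M]$, we get $S_{[g_f]}=[1,M]$ (consistent with Lemma~\ref{3.1}), so $H[g_f]\in H_{[1,M]}/H$. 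For injectivity, if $f_1\ne f_2$ then, as they agree at $0$ and $1$, there is $t_0\in(0,1)$ with $f_1(t_0)\ne f_2(t_0)$; since $g_{f_1}$ and $g_{f_2}$ use the same scale sequence $\{\mu_n\}$, for each large $k$ the point $x_k\in(\mu_{2k},\mu_{2k+1})$ with block-parameter $t_0$ satisfies $g_{f_i}(x_k)/x_k=f_i(t_0)$, so $|g_{f_1}(x_k)/x_k-g_{f_2}(x_k)/x_k|=|f_1(t_0)-f_2(t_0)|>0$ with $x_k\to\infty$; by Lemma~\ref{3.5}, $H[g_{f_1}]\ne H[g_{f_2}]$. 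Thus $f\mapsto H[g_f]$ embeds $C_{[0,1]}^{[1,M]}$ into $H_{[1,M]}/H$.

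The only point that really needs care is arranging that $g_f$ simultaneously has slopes that are bounded \emph{and} bounded away from $0$ while still encoding $f$ faithfully enough to be detected by Lemma~\ref{3.5}; letting the blocks grow (rather than using a fixed ratio $\lambda$, which would have to depend on $\|f'\|_{\infty}$ and would then obstruct comparing two different $f$'s on a common scale) is precisely what reconciles these demands. If one prefers piecewise-linear representatives as elsewhere in the paper, one instead replaces $f$ by piecewise-linear approximations on the successive blocks with mesh tending to $0$; the injectivity step then needs only the routine remark that the approximation error on the detecting block is eventually smaller than $|f_1(t_0)-f_2(t_0)|$, which changes nothing essential.
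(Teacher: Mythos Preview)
Your proof is correct and complete; the strategy---manufacture a bi-Lipschitz $g_f$ whose ratio $g_f(x)/x$ reproduces $f$ along a fixed sequence of scales, land in $H_{[1,M]}$, and separate cosets via Lemma~\ref{3.5}---is exactly the paper's. The construction itself, however, is genuinely different. The paper uses a single fixed multiplicative scale $2M$ (independent of $f$): it sets $h_1(x)=(1+x)f(x)+\frac{f(x)-2M}{2M-1}$ on $[0,1]$, glues on a linear piece, and propagates self-similarly over $[(2M)^n,(2M)^{n+1}]$; an algebraic computation then yields $\lim_n g_f(x_n)/x_n=f(z)$ for $x_n=((2M)^n-(2M)^{n-1})z+(2M)^n$. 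Monotonicity of $h_1$ is automatic because both $f$ and $f'$ are positive, so no derivative-dependent choice of scale is needed. Your construction instead writes $g_f(x)=x\phi(x)$ with $\phi$ equal to $f$ (or $f(1-\cdot)$) in a logarithmic block variable, so that $g_f(x)/x=f(t)$ holds \emph{exactly} rather than only asymptotically; the price is that down-blocks force $g_f'=f-f'/L_k$, and you buy back positivity by letting $L_k\to\infty$ and starting at an $f$-dependent $k_0$. What the paper's route buys is a uniform, $f$-independent block structure and a shorter verification; what yours buys is transparency (no limit computation, the profile is literally $f$) and a mechanism that would adapt without change to, say, merely continuous $f$. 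Both feed into Lemma~\ref{3.5} in the same way.
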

	\begin{proof}
		Let $f\in C^{[1,M]}_{[0,1]}$. We define $h_1:[0,1]\rightarrow \mathbb{R}$ by $h_1(x)=(1+x)f(x)+\frac {f(x)-2M}{2M-1}$ and $h_2:[1,2M]\rightarrow \mathbb{R}$ be a linear map such that $h_2(1)=\frac {4M^2-3M}{2M-1}$ and $h_2(2M)=(2M)$. Define $h:[0,2M]\rightarrow\mathbb{R}$ by
		\begin{equation*}
			h(x)=
			\begin{cases}
				h_1(x), & x\in[0,1]\\
				h_2(x), & x\in[1,2M].
			\end{cases}
		\end{equation*}
		Next, we define $g_f:\mathbb{R_{+}}\rightarrow\mathbb{R_{+}}$ by
		\begin{equation*}
			g_f(x)=
			\begin{cases}
				h(x), & x\in[0,2M]\\
				(2M)^n+((2M)^n-(2M)^{n-1})h\big(\frac {x-(2M)^n}{(2M)^n-(2M)^{n-1}}\big), & x\in[(2M)^n,(2M)^{n+1}].
			\end{cases}
		\end{equation*}
		Since $f$ is onto, one can easily check that $g_f$ is a quasi-isometry. This can also be verified that $f\mapsto g_f$ is one-to-one.\\
		For $x\in[(2M)^n,(2M)^{n+1}]$, we can choose $z\in[0,2M]$ such that $x=((2M)^n-(2M)^{n-1})z+(2M)^n$. From the face that $\frac {g_f(z)}{z}=f(z)$ for $z\in[0,1]$, one can show that $g_f\in H_{[1,M]}$.\\
		If $[g_{f_1}]=[g_{f_2}]$ in $H_{[1,M]}/H$, then $g_{f_2}$ and $h\circ g_{f_1}$ for some $h\in H$, lie in the same equivalence class. Then for a fixed $z\in [0,1]$, we choose $x_n=((2M)^n-(2M)^{n-1})z+(2M)^n$, and $\displaystyle\lim_{n\rightarrow\infty}\frac {g_{f_2}(x_n)}{x_n}=\displaystyle\lim_{n\rightarrow\infty}\frac {h\circ g_{f_1}(x_n)}{x_n}$, this gives $f_1(z)=f_2(z)$.\\
		Therefore, we get an embedding of $C_{[0,1]}^{[1,M]}$ of $H_{[1,M]}/H$.
	\end{proof}
\newpage
\section{Left-orderability and Locally indicability}
Now, we prove Theorem \ref{1.2}.
\subsection{Proof of Theorem \ref{1.2}}
	Let $f_1,f_2,...,f_n\in G/H$ be any finitely many non-trivial elements. Choose a sequence $\{x_{1,k}^{'}\}$ such that $x_{1,k}^{'}\rightarrow\infty$ as $k\rightarrow\infty$ and $\displaystyle\lim_{k\rightarrow\infty}\frac {f_1(x_{1,k}^{'})}{x_{1,k}^{'}}\neq 1$. For each $i>1$, the set of all subsequential limits of $\frac {f_i(x_{1,k}^{'})}{x_{1,k}^{'}}$ is either $\{1\}$ or not. After passing to subsequences, we finally obtain a subsequence $\{x_{1,k}\}$ of $\{x_{1,k}^{'}\}$ such that either  $\displaystyle\lim_{k\rightarrow\infty}\frac {f_i(x_{1,k})}{x_{1,k}}\neq1$ or $=1$. We assign $\epsilon_i=1$ when $\displaystyle\lim_{k\rightarrow\infty}\frac {f_i(x_{1,k})}{x_{1,k}}>1$, $\epsilon_i=-1$ when $\displaystyle\lim_{k\rightarrow\infty}\frac {f_i(x_{1,k})}{x_{1,k}}<1$ and we define $S_1=\{f_i:\displaystyle\lim_{k\rightarrow\infty}\frac {f_i(x_{1,k})}{x_{1,k}}=1\}$. If $S_1$ is not empty, choose $f_{i,0}\in S_1$. We choose another sequence $\{x_{2,k}^{'}\}$ such that $\displaystyle\lim_{k\rightarrow\infty}\frac {f_{i,0}(x_{2,k}^{'})}{x_{2,k}^{'}}\neq 1$. Similarly, after passing to a subsequence, we have for each $f\in S_1$, either $\displaystyle\lim_{k\rightarrow\infty}\frac {f(x_{2,k})}{x_{2,k}}\neq 1$ or $=1$. We assign $\epsilon_i=1$ when $\displaystyle\lim_{k\rightarrow\infty}\frac {f(x_{2,k})}{x_{2,k}}>1$, $\epsilon_i=-1$ when $\displaystyle\lim_{k\rightarrow\infty}\frac {f(x_{2,k})}{x_{2,k}}<1$ and define $S_2=\{f: f\in S_1~\text{and}~\displaystyle\lim_{k\rightarrow\infty}\frac {f(x_{2,k})}{x_{2,k}}=1\}$. Note that this process will stop after finitely many steps and if $f_s\in S_i\setminus S_{i+1}$, then $\displaystyle\lim_{k\rightarrow\infty}\frac {f_s(x_{r,k})}{x_{r,k}}=1$ for $r=1,2,...,i$ and $\displaystyle\lim_{k\rightarrow\infty}\frac {f_s^{\epsilon_s}(x_{i+1,k})}{x_{i+1,k}}>1$. \\
	We prove that all words with the set of letters subset of $S_t$, but not subset of $S_{t+1}$, (for $t=0,1,...$) are non-identity and we do this by induction on the lengths of the words. Let $w=f_{i_1}^{\epsilon_{i_1}}f_{i_2}^{\epsilon_{i_2}}...f_{i_m}^{\epsilon_{i_m}}\in<f_1,f_2,...,f_m>$ be a non trivial reduced word. Assuming $S_0=\{f_1,f_2,...,f_n\}$ we can choose $t$ so that $\{f_{i_1},f_{i_2},...,f_{i_m}\}\subset S_t$, but $\{f_{i_1},f_{i_2},...,f_{i_m}\}\nsubseteq S_{t+1}$. For the convenience of notation, we assume $\{f_{i_1},f_{i_2},...,f_{i_m}\}=\{h_1,h_2,...,h_k,g_1,g_2,...,g_l\}$, where $\{h_1,h_2,...,h_k,g_1,g_2,...,g_l\}\cap S_{t+1}=\{g_1,g_2,...,g_l\}$.\\
	Suppose we can write $w=w'h$. If $w'$ is a word with the set of letters, subset of $S_t$, but not of $S_{t+1}$, then by induction hypothesis, there exists  a subsequence $\{x_{t+1,k}^{'}\}$ of $\{x_{t+1,k}\}$ such that $\displaystyle\lim_{k\rightarrow\infty}\frac {w'(x_{t+1,k}^{'})}{x_{t+1,k}^{'}}>1$. Then two cases arise:\\
	Case I: $w'$ has at least one $h_i$. Then by induction, $\displaystyle\lim_{k\rightarrow\infty}\frac {w'(x_{t+1,k}^{'})}{x_{t+1,k}^{'}}>1$.\\
	Since $h\notin S_{t+1}$, so we have $\displaystyle\lim_{k\rightarrow\infty}\frac {h(x_{t+1,k}^{'})}{x_{t+1,k}^{'}}>1$. So, there exists $N\in\mathbb{N}$ such that for all $k\geq N$, $h(x_{t+1,k}^{'})>k'x_{t+1,k}^{'}$ (for some $k'>1.$) $>x_{t+1,k}^{'}$. \\
	Since all the elements can be chosen as piecewise-linear homeomorphisms with slopes greater than zero \cite{Sankaran}, therefore, $w'h(x_{t+1,k}^{'})>w'x_{t+1,k}^{'}$, for all $k\geq N$. Now,\\
	$w'h(x_{t+1,k}^{'})-x_{t+1,k}^{'}=w'h(x_{t+1,k}^{'})-w'(x_{t+1,k}^{'})+w'(x_{t+1,k}^{'})-x_{t+1,k}^{'}>w'(x_{t+1,k}^{'})-x_{t+1,k}^{'}.$
	\begin{align*}
		\text{Thus,}~\frac {w(x_{t+1,k}^{'})}{x_{t+1,k}^{'}}-1>\frac {w'(x_{t+1,k}^{'})}{x_{t+1,k}^{'}}-1>l,~\text{for~some~}l>0.
	\end{align*}
	So, there exists a subsequence $\{x_{t+1,k}^{''}\}$ of $\{x_{t+1,k}\}$ such that $\displaystyle\lim_{k\rightarrow\infty}\frac {w(x_{t+1,k}^{''})}{x_{t+1,k}^{''}}>1.$\\
	Case II: Suppose $w'$ has $g_i^{,s}$ only. Let $\displaystyle\lim_{k\rightarrow\infty}\frac {h(x_{t+1,k})}{x_{t+1,k}}=k_1>1$, then for $\epsilon>0$, there exists $N_1\in\mathbb{N}$ such that $|h(x_{t+1,k})-k_1x_{t+1,k}|<\epsilon|x_{t+1,k}|$, for all $k\geq N_1$. This implies
	\begin{align*}
		||h(x_{t+1,k})-x_{t+1,k}|-(k_1-1)x_{t+1,k}|\leq |h(x_{t+1,k})-k_1x_{t+1,k}|<\epsilon x_{t+1,k},~\text{for~all~} k\geq N_1.
	\end{align*}
	Then from above, we get,\\
	$-|h(x_{t+1,k})-x_{t+1,k}|+(k_1-1){x_{t+1,k}}<\epsilon x_{t+1,k}$, for all $k\geq N_1$. So, \\
	$|h({x_{t+1,k}})-x_{t+1,k}|>(k_1-1-\epsilon)x_{t+1,k}$, for all $k\geq N_1.$\\ 
	Since $w'$ is a quasi-isometry, we have $M>1,C>0$ and 
	\begin{center}
		$\frac {1}{M}|h(x_{t+1,k})-x_{t+1,k}|-C<|w'h(x_{t+1,k})-w'(x_{t+1,k})|$. 
	\end{center}
	Then from above, $\frac {1}{M}(k_1-1-\epsilon)|x_{t+1,k}|-C<|w'h(x_{t+1,k})-w'(x_{t+1,k})|$.\\
	Since $h(x_{t+1,k})>x_{t+1,k}$, we again have, $w'h(x_{t+1,k})>w'(x_{t+1,k})$, so we can write,
	\begin{equation*}
		\frac {w'(h(x_{t+1,k}))-w'(x_{t+1,k})}{x_{t+1,k}}>\frac {1}{M}(k_1-1-\epsilon)-\frac {C}{x_{t+1,k}}.
	\end{equation*}
	On the other hand, if $g_1,g_2\in S_{t+1}$, then there exists $N_2\in\mathbb{N}$ such that\\ $|g_i(x_{t+1,k})-x_{t+1,k}|<\epsilon x_{t+1,k},$ for all $k\geq N_2$ and $~i=1,2$. Then 
	\begin{align*}
		&|g_2\circ g_1(x_{t+1,k})-x_{t+1,k}|\leq M\epsilon{x_{t+1,k}}+C+\epsilon x_{t+1,k}\\&\implies
		\Big|\frac {g_2\circ g_1(x_{t+1,k})}{x_{t+1,k}}-1\Big|\leq M\epsilon+\frac {C}{x_{t+1,k}}+\epsilon\\&\implies \lim_{k\rightarrow\infty}\frac {g_2\circ g_1(x_{t+1,k})}{x_{t+1,k}}=1.
	\end{align*}
	Generalizing this, we get, $\displaystyle\lim_{k\rightarrow\infty}\frac {w'(x_{t+1,k})}{x_{t+1,k}}=1.$ Thus,
	\begin{align*}
		&\frac {w(x_{t+1,k})}{x_{t+1,k}}-1\\&=\frac {w'h(x_{t+1,k})-w'(x_{t+1,k})}{x_{t+1,k}}+\frac {w'(x_{t+1,k})}{x_{t+1,k}}-1\\&>\frac {1}{M}(k_1-1-\epsilon)-\frac {C}{|x_{t+1,k}|}-\epsilon>m>0.
	\end{align*}
	Thus, there exists a subsequence $\{x_{t+1,k}^{'''}\}$ of $\{x_{t+1,k}\}$ such that $\displaystyle\lim_{k\rightarrow\infty}\frac {w(x_{t+1,k}^{'''})}{x_{t+1}^{'''}}>1$.\\
	
	For the remaining case, suppose we write $w=w'g$. Then $w'$ has at least one $h_i$. So, $\displaystyle\lim_{k\rightarrow\infty}\frac {w'(x_{t+1,k}^{'})}{x_{t+1,k}^{'}}=k_2>1$, for some subsequence $\{x_{t+1,k}^{'}\}$ of $\{x_{t+1,k}\}$.\\
	Since $\displaystyle\lim_{k\rightarrow\infty}\frac {g(x_{t+1,k}^{'})}{x_{t+1,k}^{'}}=1,$ then there exists $N_3\in\mathbb{N}$ such that $|g(x_{t+1,k}^{'}-x_{t+1,k}^{'})|<\epsilon x_{t+1,k}^{'}$, for all $k\geq N_3$.
	Again since $w'$ is a quasi-isometry, therefore,\\
	$|w'g(x_{t+1,k}^{'})-w'(x_{t+1,k}^{'})|<M\epsilon x_{t+1,k}^{'}+C$, for some fixed $M>1$, $C>0$.
	\begin{align*}
		\text{Finally,~}	\frac {w(x_{t+1,k}^{'})}{x_{t+1,k}^{'}}-1&=\frac {w'g(x_{t+1,k}^{'})-w'(x_{t+1,k}^{'})}{x_{t+1,k}^{'}}+\frac {w'(x_{t+1,k}^{'})}{x_{t+1,k}^{'}}-1\\&>-M\epsilon-\frac {C}{x_{t+1,k}^{'}}+k_2-1-\epsilon^{'}.
	\end{align*}
	Then by suitably choosing $\epsilon$ and $\epsilon'$, we can show $\frac {w(x_{t+1,k}^{'})}{x_{t+1,k}^{'}}-1>k_3$, for some $k_3>0$. \\
	So, there exists a subsequence $\{x_{t+1,k}^{''}\}$ of $\{x_{t+1,k}\}$ such that $\displaystyle\lim_{k\rightarrow\infty}\frac {w(x_{t+1,k}^{''})}{x_{t+1,k}^{''}}>1$.\\
	This completes the induction step and hence the proof.\\
	
Since a left-orderable group is torsion-free, we get an obvious corollary of the Theorem $\ref{1.2}$.
\begin{cor}
	The quotient group $G/H$ is torsion-free.
\end{cor}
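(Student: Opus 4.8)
The plan is to read this off immediately from Theorem \ref{1.2}, which provides a left-invariant total order $\leq$ on $G/H$; the entire content then reduces to the elementary and well-known fact that a left-orderable group admits no nontrivial torsion. I would include the short argument for completeness rather than merely cite it, since it makes the deduction self-contained.

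First I would set up a proof by contradiction: suppose $\gamma \in G/H$ satisfies $\gamma \neq id$ and $\gamma^n = id$ for some $n \geq 2$. Since $\leq$ is a total order, exactly one of $\gamma > id$ or $\gamma < id$ holds. In the case $\gamma > id$, I would invoke left-invariance to multiply the inequality $\gamma > id$ on the left by $\gamma$, obtaining $\gamma^2 > \gamma$, and then combine with $\gamma > id$ and transitivity to get $\gamma^2 > id$. An easy induction then finishes it: from $\gamma^k > id$ and left-invariance one has $\gamma^{k+1} = \gamma \cdot \gamma^k > \gamma \cdot id = \gamma > id$. Hence $\gamma^k > id$ for every $k \geq 1$, and in particular $\gamma^n > id$, contradicting $\gamma^n = id$. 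The case $\gamma < id$ is entirely symmetric: left-multiplying by $\gamma$ repeatedly yields $\gamma^{k+1} < \gamma^k < \cdots < id$, so $\gamma^n < id$, again contradicting $\gamma^n = id$. Therefore $G/H$ has no nontrivial element of finite order, i.e.\ it is torsion-free.

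There is no genuine obstacle to overcome here: all the difficulty was already absorbed into establishing left-orderability in Theorem \ref{1.2}, and the passage from a left-invariant order to torsion-freeness is purely formal. The only points worth stating carefully are that \emph{totality} of the order is what allows the split into the two cases $\gamma > id$ and $\gamma < id$, and that \emph{transitivity} together with \emph{left-invariance} is exactly what forces the strict inequalities to persist when passing to powers. Thus the corollary is a direct and immediate consequence of Theorem \ref{1.2}.
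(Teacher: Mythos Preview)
Your proposal is correct and follows exactly the paper's approach: the paper simply states ``Since a left-orderable group is torsion-free, we get an obvious corollary of the Theorem \ref{1.2},'' and you have merely unpacked that one-line citation into the standard elementary argument. There is no difference in strategy---both deduce torsion-freeness directly from the left-orderability established in Theorem \ref{1.2}.
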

\begin{Rem}
	(i) Since $QI^{+}(\mathbb{R})/\overline{H}\cong QI(\mathbb{R}_{+})/H\times QI(\mathbb{R}_{-})/H'$ and a direct product of left-orderable groups is left-orderable, so $QI^{+}(\mathbb{R})/\overline{H}$ is left-orderable.\\
	(ii) Let $f(x)=-x,x\in\mathbb{R}$. Then $f$ has order two in $QI(\mathbb{R})/\overline{H}$, so $QI(\mathbb{R})/\overline{H}$ is not left-orderable, hence is not locally indicable.
\end{Rem}
Now, we prove Theorem \ref{1.3}.
\subsection{Proof of Theorem 1.3}
	We show that there exist $x,y,z\in G/H$ such that there does not exist any onto homomorphism from $\Gamma=<x,y,z:x^2=y^3=z^7=xyz>$ to $(\mathbb{R},+)$.\\
	Consider now the following two one-parameter families of elements of $PSL(2,\mathbb{R})$, where they are parametrized by a formal real exponent of $s$: 
	\begin{center}
		$a^s=\begin{pmatrix}
			1 & -s \\
			0 & 1 
		\end{pmatrix}$, $b^s=\begin{pmatrix}
			1 & 0 \\
			s & 1
		\end{pmatrix}$ and we define $d=aba=\begin{pmatrix}
			0 & -1 \\
			1 & 0
		\end{pmatrix}$.
	\end{center}
	Since an element $f$ of $PSL(2,\mathbb{R})$ sends a line through the origin to a line through the origin, it gives a diffeomorphism $f_1$ of $\mathbb{R}\mathbb{P}^{1}$, hence a diffeomorphism $f_2$ of $\mathbb{S}^1$ by the following commutative diagram.
	\newpage
	\begin{center}
\includegraphics[scale=1.4]{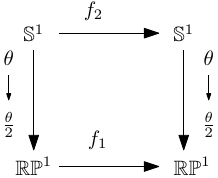}
	\end{center}
    where $\mathbb{R}\mathbb{P}^{1}=$upper half circle$/(1,0)\sim(-1,0)$.\\
	This diffeomorphism of $\mathbb{S}^1$ can be lifted to a unique homeomorphism $\widetilde{f}$ of $\mathbb{R}$ such that $\widetilde{f}(0)\in[0,1)$ due to the following diagram.
	\begin{center}
		\includegraphics[scale=1.4]{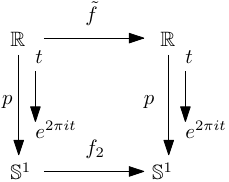}
	\end{center}
	Note that these liftings $\widetilde{f}\subset Diff_{\mathbb{Z}}(\mathbb{R})$, hence by Corollary 2.4 of \cite{Ye Zhao}, these liftings $\widetilde{f}$ can be embedded in $QI(\mathbb{R})$. We will find a subgroup $\Gamma=<x,y,z:x^2=y^3=z^7=xyz>$ of $Diff_{\mathbb{Z}}(\mathbb{R})$.\\
	For a real numbers $s$, let us define $A^s,B^s$ and $D$ to be the unique lifting of $a^s,b^s,d$ respectively, that is,  $A^r=\widetilde{a^s}$, $B^s=\widetilde{b^s}$ and $D=\tilde{d}$.\\ Consider $x=ABA$, $y=A^rBA^{1-r}$, $z=y^{-1}x$, where $r$ is a root of $r^2-r+2=2\cos(\frac {\pi}{7})$.\\
	It can be checked from the following description of $x,y$ and $z$; that each of them sends positive real axis to positive real axis, and so $\Gamma$ can be further embedded into $QI(\mathbb{R_{+}})$.
	To show $QI(\mathbb{R_+})/H$ is not locally indicable, it is enough to prove that there is no non-trivial homomorphism from $\Gamma$ to $(\mathbb{R},+)$ and the embeddings of $x,y,z$ are not in $H$.\\
	From the expression of $aba$ it is clear that,
	\begin{align*}
		(aba)_1:&~\mathbb{R}\mathbb{P}^{1}\rightarrow \mathbb{R}\mathbb{P}^{1}\\& \theta\mapsto\theta+\frac {\pi}{2},~~0\leq\theta\leq\frac {\pi}{2}\\&\theta\mapsto\theta-\frac {\pi}{2},~~\frac {\pi}{2}<\theta\leq \pi
	\end{align*} 
	and so,
	\begin{align*}
		(aba)_2:&~\mathbb{S}^1\rightarrow\mathbb{S}^1\\&\theta\mapsto\theta+\pi,~~0\leq \theta\leq \pi\\&\theta\mapsto\theta-\pi,~~\pi<\theta\leq 2\pi.
	\end{align*}
	Therefore, the map $\widetilde{aba}:\mathbb{R}\rightarrow\mathbb{R}$ is defined by $t\mapsto t+\frac {1}{2}$ and hence $\widetilde{aba}(0)=\frac {1}{2}.$\\
	Note that, $p\circ ABA=aba\circ p$ and hence $ABA-\widetilde{aba}=n$, for some fixed $n\in\mathbb{Z}$.\\
	Now we find out $ABA(0)$. Observe that $A(0)=0,B(0)=\frac {1}{4}$ and $A(\frac {1}{4})=\frac {1}{2}$. So, $ABA(0)=\frac {1}{2}$ and thus $ABA=\widetilde{aba}$.\\
	So, $x^2:\mathbb{R}\rightarrow\mathbb{R}$ is defined by $t\mapsto t+1$. Therefore, $x^2$ commutes with all $g:\mathbb{R}\rightarrow\mathbb{R}$ such that $g(t+1)=g(t)+1$.\\
	Similarly, one can show that $BAB=x$. Since $y$ is conjugate to $AB$, $(AB)^3=(ABA)(BAB)=x^2$ and $x^2$ commutes with $(A^{r-1})^{-1}$, so $y^3=x^2$. Also, $xyz=x^2$.\\
	Now, we take the map $\bar{z}:\mathbb{R}\mathbb{P}^{1}\rightarrow\mathbb{R}\mathbb{P}^{1}$, induced by $z:\mathbb{R}\rightarrow\mathbb{R}$.
	\begin{equation}\label{1.4}
		\text{So,~}\bar{z}=(a^rba^{1-r})^{-1}aba=\begin{pmatrix}
			r^2-r+1 & -r \\
			1-r & 1
		\end{pmatrix}=\begin{pmatrix}
			2\cos\frac {\pi}{7}-1 & -r \\
			1-r & 1
		\end{pmatrix},
	\end{equation} 
	since $r$ is a root of $r^2-r+2=2\cos(\frac {\pi}{7})$.\\
	By Bergman \cite{Bergman}, $\bar{z}$ has eigenvalues $e^{\pm\frac {\pi i}{7}}$, hence $\bar{z}^7=-I$. Then,
	\begin{equation}\label{1.5}
		{z}^7-id=n, \text{~for~some~fixed~}n\in\mathbb{Z}.
	\end{equation}
	From (\ref{1.4}) and for any values of $r$, we can see that both $(1,0)$ and $(0,1)$ in $\mathbb{R}\mathbb{P}^{1}$ rotates counterclockwise with an angle less than $\frac {\pi}{4}$ under $\bar{z}$, so does all the lines under $\bar{z}$. Hence $z$ will take $t$ to $t+\epsilon(t)$, where $0<\epsilon(t)<\frac {1}{4}$. So, $0<z^7(0)<\frac {7}{4}$, thus from (\ref{1.5}), $z^7(0)=1$, and hence $z^7:t\mapsto t+1$. Therefore, $z^7=x^2$, as desired.\\
	Now, we show the embeddings of $x,y $ and $z$ are not in H.\\
	Recall that from Corollary 2.4 of \cite{Ye Zhao},
	\begin{align*}
		h(x)&=e^x,~x\geq 1\\
		&=ex,~0\leq x\leq 1.
	\end{align*}
	Then $hxh^{-1}(t)=te^{\frac {1}{2}}$, so, $\frac {hxh^{-1}(t)}{t}=e^{\frac {1}{2}}\neq 1$, for all $t>e$. This implies the embedding of $x\notin H$.
	Similarly, the embedding of $y,z\notin H$.\\
	It is easy to check that there is no nontrivial homomorphism from $\Gamma$ to $(\mathbb{R},+).$
	This completes the proof.
	\section{Application}
	In the following, we provide a sufficient condition for a quasi-isometry of the positive real line to be in $\overline{H_c}$. This is also an attempt to describe the centralizer of some collections of quasi-isometries.
    \begin{Prop}\label{1.6}
	Let $f\in QI(\mathbb{R_{+}})$ such that\\
	there is a dense set $D$ of $(\lambda-\delta,\lambda+\delta)$ for some $\lambda>\delta>0$ and $|sf(x)-fs(x)|<M$, for all $s\in D$, $x\in \mathbb{R}.\hspace{2mm}-------$\hspace{8.9cm}(A)\\
	Then $f\in \overline{H_c}$.
	\end{Prop}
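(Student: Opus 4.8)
The plan is to reduce the statement to the claim that $\lim_{x\to\infty}f(x)/x$ exists, and to prove that by a ``percolation'' argument for the scaling action. By Theorem \ref{2.1} we may take $f\in PL_\delta(\mathbb{R})$; replacing $f$ by a boundedly-close representative only enlarges the constant in hypothesis $(A)$, so this costs nothing. Put $\rho(x)=f(x)/x$. Then $\rho$ is continuous on $(0,\infty)$ and, because $f$ is a quasi-isometry, there is $K>1$ with $1/K-K/x\le\rho(x)\le K+K/x$, so $\rho$ is bounded away from $0$ and from $\infty$ for large $x$. Dividing $(A)$ by $sx$ turns it into $|\rho(sx)-\rho(x)|<M/(sx)$ for $s\in D$, $x>0$; both sides are continuous in $s$ and $D$ is dense in $(\lambda-\delta,\lambda+\delta)$, so the inequality holds for all $s\in[\lambda-\delta,\lambda+\delta]$. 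Substituting $x\mapsto x/s$ gives likewise $|\rho(x/s)-\rho(x)|<M/x$ for all $s\in[\lambda-\delta,\lambda+\delta]$. Thus $f$ ``almost commutes'' with multiplication by any $s$ or $1/s$ in these ranges, with an error that is $O(1/x)$.

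Next I would arrange $\lambda+\delta>1$: if $\lambda+\delta\le 1$ then $\lambda-\delta<1$, and the maps $x\mapsto x/s$ ($s\in D$) satisfy $(A)$ with constant $M/(\lambda-\delta)$ while ranging over a dense subset of $(1/(\lambda+\delta),1/(\lambda-\delta))$, whose right endpoint exceeds $1$; so after this replacement we may assume $\lambda+\delta>1$. By Lemma \ref{3.1} the set $S_{[f]}$ is a point or a closed interval, and it is a point exactly when $\lim_{x\to\infty}\rho(x)$ exists; in that case the limit $c$ is positive (as $\rho$ is bounded away from $0$), so $[f]\in H_c\subset\overline{H_c}$ and we are done. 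Suppose, for contradiction, $S_{[f]}=[a,b]$ with $a<b$; fix $\epsilon$ with $0<3\epsilon<b-a$ and pick sequences $u_n\to\infty$ with $\rho(u_n)\to b$ and $v_n\to\infty$ with $\rho(v_n)\to a$.

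The core is a chaining estimate: there are $X_0>0$ and a constant $C=C(\lambda,\delta,M)$ so that for every $u\ge X_0$ and every $y\ge(\lambda+\delta)u$ one has $|\rho(y)-\rho(u)|<C/u$. To prove it I connect $u$ to $y$ by finitely many elementary moves, each multiplying the current position by some $t\in[\lambda-\delta,\lambda+\delta]$ or some $t\in[1/(\lambda+\delta),1/(\lambda-\delta)]$; by the two inequalities above, each move changes $\rho$ by less than $M$ divided by the smaller of its two endpoints. First multiply repeatedly by $\lambda+\delta$ until the position is within a factor $\lambda+\delta$ of $y$: the positions grow geometrically, so the total $\rho$-change along this stretch is bounded by the convergent series $M/(u(\lambda+\delta-1))$, independently of the number of moves. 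Then land exactly on $y$ with a bounded number of further moves: if $\lambda-\delta\le 1$ the residual ratio lies in $[1,\lambda+\delta)\subset[\lambda-\delta,\lambda+\delta]$, so one more multiplication suffices; if $\lambda-\delta>1$, a fixed number $j=j(\lambda,\delta)$ of extra multiplications by $\lambda+\delta$ followed by $j$ ``divide'' moves (by elements of $[1/(\lambda+\delta),1/(\lambda-\delta)]$, all $<1$, so the positions only decrease and stay $\ge u$) can be chosen with product equal to the required residual ratio, using connectedness of the set of such products to hit it exactly. These final moves add only $O(1/u)$ to the total, which proves the estimate.

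To finish, choose $N$ so large that $\rho(u_N)>b-\epsilon$ and $C/u_N<\epsilon$; the chaining estimate then forces $\rho(y)>b-2\epsilon>a+\epsilon$ for all $y\ge(\lambda+\delta)u_N$, contradicting $\rho(v_n)\to a$. Hence $S_{[f]}$ is a single point and $[f]\in\overline{H_c}$. The main obstacle is precisely the chaining estimate: making the accumulated error uniformly $O(1/u)$ while reaching \emph{arbitrarily} large scales requires the geometric growth of the intermediate positions, and the subcase $\lambda-\delta>1$ — where no admissible scalings near $1$ are available — forces one to mix in the inverse (``divide'') moves and to order the moves so that the intermediate positions never shrink below $u$.
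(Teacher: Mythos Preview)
Your argument is correct and is organized quite differently from the paper's. Both proofs rest on the same mechanism --- the hypothesis $(A)$ forces $\rho(x)=f(x)/x$ to be almost invariant under scalings from a nontrivial interval --- but the paper runs the argument through sequences: it fixes a reference sequence $\{x_n\}$ along which $f^{-1}(x_n)/x_n$ converges, assumes $\lambda+\delta<1$, observes that $\bigcup_{m\ge N} D^m$ is dense near $0$, and for an arbitrary divergent $\{y_n\}$ picks $s_n\in\bigcup D^m$ with $s_n f^{-1}(x_{r_n})\approx y_n$, then uses the iterated bound $|s^m f(x)-f(s^m x)|<M/(1-s)$ to compare $f(y_n)/y_n$ with the reference ratio. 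Your route is more direct: you normalize to $\lambda+\delta>1$, extract the pointwise inequality $|\rho(sx)-\rho(x)|<M/(sx)$, and chain multiplications by $\lambda+\delta$ to get a telescoping geometric series bound $|\rho(y)-\rho(u)|<C/u$ valid for all $y\ge(\lambda+\delta)u$, which immediately forces $S_{[f]}$ to be a point. What your approach buys is transparency --- no passage to $f^{-1}$, no subsequence bookkeeping, and an explicit uniform estimate that makes the contradiction with $S_{[f]}=[a,b]$, $a<b$, immediate; the case split $\lambda-\delta\le1$ versus $\lambda-\delta>1$ (with the $j$ extra up-then-down moves in the latter) is the only place where you pay for this directness, and you handle it correctly since the intermediate positions remain $\ge y\ge u$ so the divide-move errors stay $O(1/u)$. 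The paper's sequence formulation, by contrast, absorbs the residual-ratio issue into the density of $\bigcup D^m$ at the cost of a less explicit final estimate.
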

	\begin{proof}
	Let $\{a_n\}$ be a divergent sequence such that $\frac {f(a_n)}{a_n}$ converges. Then we can choose a subsequence $\{x_n\}$ of $\{a_n\}$ so that $\frac {f^{-1}(x_{n})}{x_{n}}\rightarrow d$. Let $\{y_n\}$ be any sequence.\\
	We assume $1>\lambda+\delta$ as other cases can be dealt similarly.\\ Then there exists $N\in\mathbb{N}$ such that $\displaystyle\bigcup_{m\geq N}((\lambda-\delta)^m,(\lambda+\delta)^m)=(0,(\lambda-\delta)^N)$. So, $\displaystyle\bigcup_{m\geq N}D^m$ is dense in $(0,(\lambda+\mu)^{N})$.\\
	We can choose a subsequence $\{x_{r_n}\}$ such that $\frac {f^{-1}(x_{r_n})}{y_n}>n$, so $\frac {y_n}{f^{-1}(x_{r_n})}<\frac {1}{n}$.\\
	Now, we can take $s_n\in\displaystyle\bigcup_{m\geq N}D^m$ such that $\Big|s_n-\frac {y_n}{f^{-1}(x_{r_n})}\Big|<\frac {1}{n}~\frac {y_n}{f^{-1}(x_{r_n})}$, equivalently, $\Big|s_n\frac {f^{-}(x_{r_n})}{y_n}-1\Big|<\frac {1}{n}$.
	For all $s^m\in\displaystyle\bigcup_{m\geq N}D^m,$ one can show, $|s^mf(x)-fs^m(x)|<(1+s+s^2+...+s^{m-1})M<\frac {M}{1-s}$. Thus, 
	\begin{align}
	|s_nf(x)-fs_n(x)|<\frac {M}{1-(\lambda+\delta)}.
	\end{align}
	Now,
	\begin{align*}
	\Big|\frac {f(y_n)}{y_n}-d\Big|&\leq \Big|\frac{f(s_nf^{-1}(x_{r_n}))}{s_nf^{-1}(x_{r_n})}-\frac {f(y_n)}{y_n}\Big|+\Big|\frac {f(s_nf^{-1}(x_{r_n}))}{s_nf^{-1}(x_{r_n})}-d\Big|\\&<\Big(2k+\frac {c}{\big|s_nf^{-1}(x_{r_n})\big|}\Big)\Big|1-\frac {s_nf^{-1}(x_{r_n})}{y_n}\Big|+\frac {c}{|y_n|}+\Big|\frac {f(s_nf^{-1}(x_{r_n}))}{s_nf^{-1}(x_{r_n})}-d\Big|\\&<\Big(2k+\frac {c}{\big|s_nf^{-1}(x_{r_n})\big|}\Big)\Big|1-\frac {s_nf^{-1}(x_{r_n})}{y_n}\Big|+\frac {c}{|y_n|}+\delta_n\\&<\Big(2k+\frac {c}{y_n(1-\frac {1}{n})}\Big)\frac {1}{n}+\frac {c}{|y_n|}+\delta_n\\&\leq\Big(2k+\frac {2c}{y_n}\Big)\frac {1}{n}+\frac {c}{|y_n|}+\delta_n
	\end{align*}
	where $c>0$ is a constant and we get $\delta_n$ by using \ref{1.6} where $\delta_n\rightarrow 0$.\\
	This completes the proof.
	\end{proof}
	A simple collection of examples which satisfy (A) of Prop. \ref{1.6} is $f_c(x)=cx,$ for $c>0,x\geq 0.$ Now, we give an example $g_\lambda\in QI(\mathbb{R_{+}})$ such that $g_\lambda$ commutes with $f_\lambda~(x\mapsto \lambda x)$ and $g_\lambda\notin \overline{H_c}$, for any $\lambda>0$.
	\begin{Ex}\label{1.7}
	We construct $g_\lambda$ for two cases:\\
	Case I: Let $\lambda>1$. Define $g_\lambda: \mathbb{R_{+}}\rightarrow\mathbb{R_{+}}$ by
	\begin{align*}
	g_\lambda(x)&=\frac {5}{2}x,~x\in [0,1]\\&=1+\frac {3}{2}\lambda^n+(x-\lambda^n),~x\in\Big[\lambda^n,\frac {\lambda^n+\lambda^{n+1}}{2}\Big]\\ &= 1+\lambda^n+\frac {\lambda^{n+1}}{2}+2\Big(x-\frac {\lambda^n+\lambda^{n+1}}{2}\Big),~x\in \Big[\frac {\lambda^n+\lambda^{n+1}}{2},\lambda^{n+1}\Big].
	\end{align*}
	It can be easily checked that $[g_\lambda]\in QI(\mathbb{R_+})$ and $[g_\lambda f_\lambda g_\lambda^{-1}]=[f_\lambda]$.\\
	Case II: Let $\lambda<1$. Then $\frac {1}{\lambda}>1$ and similarly as case I, $g_{\frac {1}{\lambda}}\in QI(\mathbb{R_{+}})$. Then $f_\lambda^{-1}(x)=f_{\frac {1}{\lambda}}(x)$ and $g_{\frac {1}{\lambda}}f_\lambda^{-1}=f_\lambda^{-1}g_{\frac {1}{\lambda}}$, which implies $[g_{\frac {1}{\lambda}}f_\lambda g_{\frac {1}{\lambda}}^{-1}]=[f_\lambda]$.\\
	
	Considering $\{\lambda^n\}$ and $\{\frac {\lambda^n+\lambda^{n+1}}{2}\}$ in Case I, we deduce $[g_\lambda]\notin \overline{H_c}$. Similarly $[g_{\frac 
	{1}{\lambda}}]$ (in Case II) $\notin \overline{H_c}$.\\
					
	An interesting fact is that $g_\lambda$ commutes with $f_c~(x\mapsto cx)$, for $c>0$ if and only if $c=\lambda^r$, for $r\in\mathbb{Z}$ and hence both $g_\lambda$ and $g_{\frac 
	{1}{\lambda}}$ do not satisfy the condition (A).
	\end{Ex}
	\begin{Rem}
	(i) The condition (A) in Prop. \ref{1.6} is more analytical than algebraic in nature. So, it can be a good question to ask whether the conclusion of Prop. \ref{1.6} can be deduced in spite of relaxing (A) so that $f$ only commutes with all the functions $f_c$ for $c\in D$.\\
	(ii) Apart from Example \ref{1.7}, one can search for an example $g$ such that $g\notin \overline{H_c}$ and $g$ commutes with $f_c$, $c\in D$ for some $D$ but does not satisfy (A).	
	\end{Rem}
As an application of Theorem \ref{1.1}, now we discuss when two large classes of quasi-isometries of positive real line commute. These classes are described in the following.
\subsection{Discussion}
Let $[f]\in H_{[1,M_1]},[g]\in H_{[1,M_2]}$ for $M_1,M_2>1$. Let $\{a_n\}$ be a strictly increasing sequence with $f(a_n)=g(a_n)=a_n$ and $x_n$ be the only break point for both $f$ and $g$ in $[a_n,a_{n+1}]$. \\
If $S_{[g\circ f]}\neq S_{[f\circ g]}$, then for any $h,h'\in H$, we can easily deduce that $S_{[h'\circ g\circ h\circ g]}\neq S_{[h\circ f\circ h'\circ g]}$, so $h'\circ g$ and $h\circ f$ do not commute. Nest we assume $S_{[g\circ f]}=S_{[f\circ g]}$.\\
				Now, we write the expression of $f$ and $g$ on $[a_n,a_{n+1}]$ in terms of the slopes $\lambda_n,\mu_n,\lambda_n^{'},\mu_n^{'}$ respectively as follows:
				\begin{align*}
					f(x)&=a_n+\lambda_n(x-a_n),~x\in [a_n,x_n]\\
					&=a_n+\lambda_n(x_n-a_n)+\mu_n(x-x_n),~x\in [x_n,a_{n+1}]\\\text{and}\\
					g(x)&=a_n+\lambda_n^{'}(x-a_n),~x\in [a_n,x_n]\\
					&=a_n+\lambda_n^{'}(x_n-a_n)+\mu_n^{'}(x-x_n),~x\in [x_n,a_{n+1}].
				\end{align*}
			Then $g\circ f$ have break-points $x_n$ and $f^{-1}(x_n)$ in $[a_n,a_{n+1}]$. Three cases may arise:\\
			Case (i): Let $x_n<f^{-1}(x_n)$.\\
			(a) If $x\in[a_n,x_n]$, then $g\circ f(x)=g(a_n+\lambda_n(x-a_n))=a_n+\lambda_n\lambda_n^{'}(x-a_n)$.\\
			(b) If $x\in[x_n,f^{-1}(x_n)]$, then $g\circ f(x)=a_n+\lambda_n^{'}(f(x)-a_n)=a_n+\lambda_n^{'}\lambda_n(x_n-a_n)+\lambda_n^{'}\mu_n(x-x_n)$.\\
			(c) If $x\in[f^{-1}(x_n),a_{n+1}]$, then $g\circ f(x)=a_n+\lambda_n^{'}(x_n-a_n)+\mu_n^{'}(f(x)-x_n)=a_n+\lambda_n^{'}(x_n-a_n)+\mu_n^{'}[a_n+\lambda_n(x_n-a_n)+\mu_n(x-x_n)-x_n]$.\\
			Thus from above, in this case the slopes of $g\circ f$ are $\lambda_n\lambda_n^{'}$, $\lambda_n^{'}\mu_n$, $\mu_n^{'}\mu_n$ respectively.\\
			Case (ii): Let $x_n=f^{-1}(x_n)$. Then the slopes of $g\circ f$ are $\lambda_n\lambda_n^{'}$ and $\mu_n^{'}\mu_n$ respectively.\\
			Case (iii): Let $x_n>f^{-1}(x_n)$. Then similarly as above, we can show that the slopes of $g\circ f$ are $\lambda_n\lambda_n^{'},\mu_n^{'}\lambda_n,\mu_n^{'}\mu_n$ respectively.

			For $f\circ g$, $x_n$ and $g^{-1}(x_n)$ are two break-points in $[a_n,a_{n+1}]$. Here also three cases arise:\\
			Case (i): If $x_n<g^{-1}(x_n)$, then the slopes of $f\circ g$ are $\lambda_n^{'}\lambda_n,\mu_n^{'}\lambda_n$ and $\mu_n^{'}\mu_n$ respectively.\\
			Case (ii): If $x_n=g^{-1}(x_n)$, then the slopes of $f\circ g$ are $\lambda_n^{'}\lambda_n$ and $\mu_n^{'}\mu_n$ respectively.\\
			Case (iii): If $x_n>g^{-1}(x_n)$, then the slopes of $f\circ g$ are $\lambda_n^{'}\lambda_n,\lambda_n^{'}\mu_n$ and $\mu_n^{'}\mu_n$ respectively.

				If $\lambda_n<1-\epsilon_1,~\lambda_n^{'}>1+\epsilon_2$ for some $0<\epsilon_1,\epsilon_2<1,$ then $g\circ f$ have break-points $x_n$ and $f^{-1}(x_n)$ in $[a_n,a_{n+1}]$ and $f\circ g$ have break-points $g^{-1}(x_n)$ and $x_n$. So, we get a partition $a_n<g^{-1}(x_n)<x_n<f^{-1}(x_n)<a_{n+1}$ of $[a_n,a_{n+1}]$ such that on each subintervals, both of $g\circ f$ and $f\circ g$ are linear. The difference of slopes of $g\circ f$ and $f\circ g$ in these subintervals are $0,|\lambda_n^{'}||\lambda_n-\mu_n|, |\mu_n||\lambda_n^{'}-\mu_n^{'}|$ and $0$ respectively.\\
				One can easily show that there exists $K_2>1$ with $\frac {g^{-1}(x_n)}{a_n},\frac {x_n}{g^{-1}(x_n)}>K_2$, for all $n\in\mathbb{N}$ and also compute $f^{-1}(x_n)=x_n+\frac {(1-\lambda_n)(x_n-a_n)}{\mu_n}$. Therefore,\\
				 $\frac {f^{-1}(x_n)}{x_n}=1+\frac {1-\lambda_n}{\mu_n}(1-\frac {a_n}{x_n})>1+\frac {\epsilon}{M}(1-\frac {1}{K})$, for some $M>1$. On the other hand, if for some subsequence $\{r_n\}$ of $\{n\}$, $f^{-1}(x_{r_n})\rightarrow a_{r_n+1}$, then $x_{r_n}\rightarrow f(a_{r_n+1})=a_{r_n+1}$ (since $f$ is chosen to be bilipschitz), which contradicts our assumption. So, there exists a $K_1>1$ such that $\frac {a_{n+1}}{f^{-1}(x_n)}>K_1>1$.\\
				 Thus the hypothesis of the Theorem \ref{1.1} (b) is satisfied, so $[g\circ f]=[f\circ g]~($mod $H)$ implies $|\lambda_n-\mu_n|\rightarrow 0$ and $|\lambda_n^{'}-\mu_n^{'}|\rightarrow 0$, which contradicts our previous assumption on $\lambda_n$ and $\lambda_n^{'}.$ So, $[g\circ f]\neq [f\circ g]~($mod $H)$. \\
				 For the remaining conditions on $\lambda_n$ and $\lambda_n^{'}$ as given in the following theorem, one can get the same conclusion. Hence we have proven the following result on commutativity of $f$ and $g$.
				 \begin{Thm}\label{5.4}
				 \textit{Let $[f],[g]\in H_{[1,\infty]}\big(=\displaystyle\bigcup_{M>1} H_{[1,M]}\big)$ with a strictly increasing sequence $\{a_n\}$ such that $f(a_n)=g(a_n)=a_n$ and $x_n$ is the only break-point in $[a_n,a_{n+1}]$. Let $\epsilon>0$ and $K,K'>1$ be constants such that $|\lambda_n-1|,|\lambda_n^{'}-1|,|\lambda_n-\lambda_n^{'}|>\epsilon$ and $K<\frac {x_n}{a_n},\frac {a_{n+1}}{x_n}<K'$. Then $h\circ f$ and $h'\circ g$ do not commute for any $h,h'\in H$.}
				 \end{Thm}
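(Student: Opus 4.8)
The plan is to show that if $h\circ f$ and $h'\circ g$ commuted for some $h,h'\in H$, then by passing to the invariant $S_{[\cdot]}$ and then to the slope data along $\{a_n\}$, we would be forced to conclude $|\lambda_n-\lambda_n'|\to 0$, contradicting the standing hypothesis $|\lambda_n-\lambda_n'|>\epsilon$. First I would dispose of the easy case: if $S_{[g\circ f]}\neq S_{[f\circ g]}$, then since $S_{[\cdot]}$ is unchanged under left and right multiplication by elements of $H$ (this follows from Lemma~\ref{3.5}, or directly from the definition of $H$ and the computation in Proposition~2.something on $\phi$), we get $S_{[h'\circ g\circ h\circ f]}\neq S_{[h\circ f\circ h'\circ g]}$, so the two products are not even in the same $H$-coset and certainly do not commute. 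So from now on assume $S_{[g\circ f]}=S_{[f\circ g]}$, which puts us in the setting where Theorem~\ref{1.1}(b) can be brought to bear.

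Next I would set up the partition. On each $[a_n,a_{n+1}]$ the maps $f$ and $g$ each have the single break-point $x_n$, so $g\circ f$ has break-points among $\{x_n, f^{-1}(x_n)\}$ and $f\circ g$ has break-points among $\{x_n, g^{-1}(x_n)\}$. Taking the common refinement gives a partition of $[a_n,a_{n+1}]$ into at most four subintervals with endpoints a permutation of $a_n < \{g^{-1}(x_n), x_n, f^{-1}(x_n)\} < a_{n+1}$, on each of which \emph{both} $g\circ f$ and $f\circ g$ are linear; moreover $g\circ f(a_n)=f\circ g(a_n)=a_n$. Using the slope computations already carried out in the Discussion (the three-case analysis for $g\circ f$ and for $f\circ g$), the vector of slope-differences of $g\circ f$ and $f\circ g$ along this partition is, in each case, built out of the two quantities $|\lambda_n'|\,|\lambda_n-\mu_n|$ and $|\mu_n|\,|\lambda_n'-\mu_n'|$ (with zeros on the outermost pieces). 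The point is that \emph{if $[g\circ f]=[f\circ g]\pmod H$}, Theorem~\ref{1.1}(b) applied to the pair $(g\circ f, f\circ g)$ with the sequence $\{a_n\}$ forces $\|A_n-B_n\|_\infty\to 0$, hence $|\lambda_n'|\,|\lambda_n-\mu_n|\to 0$ and $|\mu_n|\,|\lambda_n'-\mu_n'|\to 0$. Since $\lambda_n',\mu_n$ are bounded below by a positive constant (the slopes of a quasi-isometry representative are bounded away from $0$), this gives $|\lambda_n-\mu_n|\to 0$ and $|\lambda_n'-\mu_n'|\to 0$.

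The remaining work is to verify the hypothesis of Theorem~\ref{1.1}(b), namely that the partition points grow geometrically: there is $K_1>1$ with $x_{n,i}>K_1 x_{n,i-1}$ for every consecutive pair in the refined partition. For the pieces $a_n < g^{-1}(x_n) < x_n$ this is immediate from $K<\frac{x_n}{a_n},\frac{a_{n+1}}{x_n}<K'$ together with the fact that $g$ is bilipschitz, which controls $\frac{g^{-1}(x_n)}{a_n}$ and $\frac{x_n}{g^{-1}(x_n)}$ from below; likewise for $f^{-1}(x_n)$. One computes $f^{-1}(x_n)=x_n+\frac{(1-\lambda_n)(x_n-a_n)}{\mu_n}$, so $\frac{f^{-1}(x_n)}{x_n}=1+\frac{1-\lambda_n}{\mu_n}\bigl(1-\frac{a_n}{x_n}\bigr)$ is bounded away from $1$ using $|\lambda_n-1|>\epsilon$, $\mu_n$ bounded, and $\frac{x_n}{a_n}>K$; and if $\frac{a_{n+1}}{f^{-1}(x_n)}$ failed to be bounded away from $1$ along a subsequence, then $x_{r_n}\to f(a_{r_n+1})=a_{r_n+1}$ (again by bilipschitz control), contradicting $\frac{a_{n+1}}{x_n}<K'$; a symmetric argument handles the ordering $x_n>f^{-1}(x_n)$ and the analogous refinement for $f\circ g$. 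With the geometric-growth hypothesis in hand, Theorem~\ref{1.1}(b) applies and delivers the contradiction above against $|\lambda_n-\lambda_n'|>\epsilon$ (note $|\lambda_n-\mu_n|\to 0$ and $|\lambda_n'-\mu_n'|\to 0$ are incompatible with $|\lambda_n-\lambda_n'|>\epsilon$ once one also uses $S_{[g\circ f]}=S_{[f\circ g]}$, which ties $\mu_n,\mu_n'$ back to $\lambda_n,\lambda_n'$). Hence $[g\circ f]\neq[f\circ g]\pmod H$; and since the same slope data is produced by $h\circ f$ and $h'\circ g$ up to error tending to $0$ (left multiplication by $H$ perturbs $\frac{(\cdot)(x)}{x}$ by a vanishing amount, by Lemma~\ref{3.5}), the identity $(h\circ f)(h'\circ g)=(h'\circ g)(h\circ f)$ would force the same conclusion $\|A_n-B_n\|_\infty\to 0$, again a contradiction. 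Therefore $h\circ f$ and $h'\circ g$ do not commute.

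The main obstacle I anticipate is bookkeeping rather than conceptual: one must check that \emph{every} case of the relative ordering of $g^{-1}(x_n), x_n, f^{-1}(x_n)$ produces a refined partition whose consecutive ratios are uniformly bounded below, and that in each such case the surviving slope-difference coordinates are exactly (up to bounded factors) $|\lambda_n-\mu_n|$ and $|\lambda_n'-\mu_n'|$ — so that their vanishing, combined with $S_{[g\circ f]}=S_{[f\circ g]}$, really does contradict $|\lambda_n-\lambda_n'|>\epsilon$. Establishing that last incompatibility cleanly (i.e.\ pinning down how $\mu_n,\mu_n'$ are determined by $\lambda_n,\lambda_n'$ and the common value of $S_{[g\circ f]}=S_{[f\circ g]}$) is the delicate point; everything else is the geometric-growth verification, which the Discussion has essentially already carried out for the representative case $\lambda_n<1-\epsilon_1$, $\lambda_n'>1+\epsilon_2$.
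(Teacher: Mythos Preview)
Your approach is essentially the paper's: reduce to $S_{[g\circ f]}=S_{[f\circ g]}$, compute the slope differences of $g\circ f$ and $f\circ g$ on the common refinement, verify the geometric-growth hypothesis of Theorem~\ref{1.1}(b), and derive a contradiction from $\|A_n-B_n\|_\infty\to 0$. The case analysis, the formula $f^{-1}(x_n)=x_n+\frac{(1-\lambda_n)(x_n-a_n)}{\mu_n}$, and the bilipschitz argument ruling out $f^{-1}(x_{r_n})\to a_{r_n+1}$ all match the Discussion preceding the theorem.

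The one place you are genuinely vaguer than the paper is the final contradiction. You phrase it as ``$|\lambda_n-\mu_n|\to 0$ and $|\lambda_n'-\mu_n'|\to 0$ are incompatible with $|\lambda_n-\lambda_n'|>\epsilon$ once one also uses $S_{[g\circ f]}=S_{[f\circ g]}$'', and flag this as the delicate point. In fact no appeal to $S_{[\cdot]}$ is needed, and the contradiction is with $|\lambda_n-1|>\epsilon$ rather than $|\lambda_n-\lambda_n'|>\epsilon$: since $f(a_n)=a_n$ and $f(a_{n+1})=a_{n+1}$, one has $\lambda_n(x_n-a_n)+\mu_n(a_{n+1}-x_n)=a_{n+1}-a_n$, so $\mu_n-1=(1-\lambda_n)\dfrac{x_n-a_n}{a_{n+1}-x_n}$. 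The bounds $K<\frac{x_n}{a_n},\frac{a_{n+1}}{x_n}<K'$ pin the ratio $\frac{x_n-a_n}{a_{n+1}-x_n}$ between two positive constants, so $|\lambda_n-1|>\epsilon$ forces $|\lambda_n-\mu_n|$ to be bounded below by a fixed positive constant, directly contradicting $|\lambda_n-\mu_n|\to 0$. This is what the paper means by ``contradicts our previous assumption on $\lambda_n$ and $\lambda_n'$''.

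A minor simplification for the $h,h'$ step: rather than arguing that left multiplication by $H$ perturbs the slope data by a vanishing amount, just use that $H$ is normal in $G$. If $(h\circ f)(h'\circ g)=(h'\circ g)(h\circ f)$ in $QI(\mathbb{R}_+)$, then passing to $G/H$ gives $[f][g]=[g][f]$, i.e.\ $[f\circ g]=[g\circ f]\pmod H$, which is exactly what the main argument rules out.
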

			 \begin{Rem}
			 	(i) The above condition, that is, there exists $\epsilon>0$ such that $|\lambda_n-1|,|\lambda_n^{'}-1|,|\lambda_n-\lambda_n^{'}|>\epsilon$ may make the impression that the hypothesis of the previous theorem is rather restrictive. But there is a large class of functions which satisfies this with the other conditions.\\
			 	(ii) We can construct $[g]\in H_{[1,M_2]}$ for some $M_2>1$ such that $\lambda_n^{'}\rightarrow 1$ and $\lambda_n^{'}\rightarrow\lambda_n$ in the disjoint sequence of intervals of the form $[a_n,a_{n+1}]$. Then $[g]$ is neither $[id]$ nor $[f]$, but $[g]$ commutes with $[f]$ (mod $H$).
			 \end{Rem}
\section*{Acknowledgement}
The authors thank Parameswaran Sankaran for his valuable suggestions and comments.

			\end{document}